\colorlet{colorCE}{red}
\colorlet{colorAL}{blue!50}
\colorlet{colorAL}{blue!50}
\colorlet{colorCE}{red!50}
\newtheorem{theorem}{Theorem}[section]
\newtheorem{definition}[theorem]{Definition}
\newtheorem{corollary}[theorem]{Corollary}
\newtheorem{proposition}[theorem]{Proposition}
\newtheorem{lemma}[theorem]{Lemma}
\theoremstyle{definition}
\newcommand{\R}{\mathbb{R}}
\newcommand{\C}{\mathbb{C}}
\newcommand{\N}{\mathbb{N}}
\newcommand{\T}{\mathbb{T}}
\begin{document}

\title[A generalization  of Krein`s extension formalism]{A generalization  of Krein`s extension formalism for symmetric relations with deficiency index $(1,1)$ }
\author{Christian Emmel}
\begin{abstract}
Let $S$ be a  symmetric relation  with deficiency index $(1,1)$. In this article, we extend  Krein`s  resolvent formalism in order to describe all, not necessarily self-adjoint, extensions $S \subset \tilde{A}$ with $\varrho(\tilde{A})\neq \emptyset$. The corresponding $Q$-functions  turn out to be quasi-Herglotz functions. We will use their structure to characterize the spectrum of such extensions.
Finally, we also provide a model for such an extension on a reproducing kernel Hilbert space  when $S$ is simple. 
\end{abstract}
\maketitle

\section{Introduction}
The extension theory of symmetric relations  with deficiency index $(1,1)$ has been a continuously evolving field of research for over  a century, see the surveys \cite{SurveyWeylFunction}, \cite{Beherndt} and the references therein. Historically,  the main focus has been on self-adjoint extensions, whose properties have been extensively studied through the use of  Weyl-functions (or $Q$-functions). In this article, we extend this approach  to characterize all, not-necessarily self-adjoint, extensions which have a nonempty resolvent set. Such extensions have attracted substantial interested on various occasions recently, see in particular \cite{SIngPertubation1}, \cite{SingPertubation2}, \cite{RegPert2},  \cite{RegularPert} and \cite{BehrendtNon-self} as well as the references therein. 
\par\smallskip
First, let us describe the two well-established main components of the self-adjoint extension theory. To this end, let $S$ be a symmetric relation  with deficiency index $(1,1)$ and fix some self-adjoint extensions $A$.
At the heart of the self-adjoint extension theory lies \emph{one} $\varphi$-field given by the following formula:
\begin{align*}
\varphi_\phi(w) := \left(I+(w-i)(A-w)^{-1} \right) \phi   \quad   \forall   w \in \varrho(A).
\end{align*}
Here, the element $\phi $ is a non-trivial element in the defect space $\eta_i(S)$ at the point $i$. Then every other self-adjoint extension $\tilde{A}$ is given by Krein`s resolvent formula:
\begin{align}\label{forBasic1}
    (\tilde{A}-\zeta)^{-1}=(A-\zeta)^{-1}-\frac{[\ \cdot \ , \varphi_\phi(\overline{\zeta}) ]_\mathcal{H}}{q(\zeta)+c} \cdot \varphi_\phi(\zeta).
\end{align}
Here, the \emph{parameter} is the arbitrary real constant $c\in \R$. Moreover,  $q$ is the so called  $Q$-function \cite{KrienLanger2}, which is (essentially) determined by the following relation: 
\begin{align*}
    \frac{q(\zeta)-q(\overline{w})}{\zeta-\overline{w}}= [\varphi_\phi(\zeta), \varphi_\phi(w)]_\mathcal{H}
    , \quad \zeta,w \in \varrho(A).
\end{align*}
It is known that the $Q$-function $q \vcentcolon \C \setminus \R \rightarrow \C$ is a  Herglotz-Nevanlinna function, which means that it maps the upper-half plane  $\C^+$ into $\overline{\C^+}$ and satisfies the symmetry condition $q(\overline{\zeta})= \overline{q(\zeta)}$. Such a function possesses an integral representation, which allows us to express $q$ as follows: 
\begin{align*}
    q(\zeta)= a+b \zeta + \int_{\R} \frac{1+t \zeta}{t-\zeta} \cdot \mathrm{d}\nu
\end{align*}
Here, $a\in \R$, $b \geq 0$ are constants and $\nu$ is a finite \emph{non-negative} Borel measure. 
\par\smallskip
Finally, the function $q+c$ encodes the spectral properties of the self-adjoint extension $\tilde{A}$. For example, its zeros of order $1$ correspond to eigenvalues of algebraic multiplicity $1$. It is therefore a very useful tool to investigate the spectral properties of $\tilde{A}$. 
\par\smallskip
Now let us  consider  \emph{regular} extensions of $S$, by  which we mean relations $\tilde{A}$ satisfying $S \subset \tilde{A}$ and $\varrho(\tilde{A})\neq \emptyset$. In this article, we show that the class of regular extension is again given  by a Krein type resolvent formalism, which is now of the following form: 
\begin{align*}
    (\tilde{A}-\zeta)^{-1}=(A-\zeta)^{-1}-\frac{[\ \cdot \ , \varphi_\phi(\overline{\zeta}) ]_\mathcal{H}}{q(\zeta)+c} \cdot \varphi_v(\zeta).
\end{align*}
 Here, $\phi\in \eta_i(S)\setminus \{0\}$ is a \emph{fixed} defect element and $A$ is a \emph{fixed} self-adjoint extension, while  the \emph{parameters} are arbitrary elements  $v \in \mathcal{H}$ and  $c \in \C$. Finally, thefunction $q$ is (essentially) determined by the following identity: 
\begin{align*}
    \frac{q(\zeta)-q(\overline{w})}{\zeta-\overline{w}}= [\varphi_v(\zeta), \varphi_\phi(w)]_\mathcal{H}
    , \quad \zeta,w \in \varrho(A).
\end{align*}
The function $q$ 
is in general not a Herglotz-Nevanlinna function any more, but it is a quasi-Herglotz function \cite{QuasiHN}.   These are functions that admit an integral representation of the following form: 
\begin{align*}
    q(\zeta)= a+b \zeta + \int_{\R} \frac{1+t \zeta}{t-\zeta} \cdot \mathrm{d}\nu
\end{align*}
Here, $a,b\in \C$ and $\nu$ is a finite \emph{complex} Borel measure. 
It is important to note that $q$ also depends on $v$.
In summary, we establish that regular extensions can be treated in a slightly more involved but similar way as self-adjoint extensions.
\par\smallskip
As in the self-adjoint case, the function $q+c$ encodes the spectral properties of $\tilde{A}$. We will use these functions to characterize the spectrum of such extensions. In particular, we will determine all possible eigenvalue distributions on $\C^+$ (or $\C^-)$.  
\par\smallskip
Finally, let us consider two examples. First, let $A$ be a  self-adjoint operator and $x\in \mathcal{H}\setminus \{0\}$ be an arbitrary non-zero element. Then the restriction 
\begin{align}\label{For:Basic4390}
    S= A_{|\{u\in \mathrm{dom}(A) \vcentcolon \ [u,x]=0\}}
\end{align}
is a symmetric operator with deficiency index $(1,1)$. If $A$ is bounded, then the self-adjoint extensions $S$ are  (essentially) just the  perturbations 
\begin{align*}
    A+c \cdot [\cdot,x] \cdot x,
\end{align*}
where $c\in \R$ is a real constant. On the other hand,  the regular extensions are in this case simply the asymmetric perturbations 
\begin{align*}
     A+[\cdot,x]  \cdot y,
\end{align*}
where $y \in \mathcal{H}$ is an arbitrary element.  Asymmetric perturbations of  self-adjoint operators with pure point spectrum have been investigated in a series of papers recently \cite{RegPert2}, \cite{RegularPert}.
\par\smallskip
The situation is more complicated when $A$ is not bounded. Then, in order  to obtain a complete picture, one would need to consider perturbations by singular elements $x,y \in \mathcal{H}_{-2}$ as well. We will not go into details here. However,  we take the opportunity to announce that in a follow-up paper we will give an interpretation of regular extensions as asymmetric singular perturbations.
\par\smallskip
Next, we  discuss simple symmetric operators with deficiency index $(1,1)$. These are characterized by the following minimality condition: 
\begin{align*}
         \overline{\mathrm{span}}\{\eta_{\lambda}(S) \vcentcolon \lambda \in \C \setminus \R \}= \mathcal{H}.
\end{align*}
Here, $\eta_{\lambda}(S)$ denotes the defect space at $\lambda$. Equivalently, one could also demand that $S$ has a self-adjoint extension which has a cyclic vector. 
In any case, this condition is satisfied by many interesting differential operators, for example by Sturm-Lioville differential expressions on the interval $[0,\infty)$ under weak and reasonable assumptions. 
\par\smallskip
Over the years, various models for their extensions have been developed using methods from complex analysis and integral operators \cite{Treil}, \cite{SIngPertubation1}. 
Interestingly, simple symmetric operators and their (self-adjoint) extensions are also closely connected to certain well-studied function spaces \cite{DeBranges}. In this paper, we will use this connection  to give a complete and surprisingly simple function-theoretic description of their (regular) extension theory. 
\par\smallskip
To this end, let 
$h$ be a Herglotz-Nevanlinna function  and $\mathcal{L}(h)$ the reproducing kernel Hilbert space  associated to the  Nevanlinna kernel 
\begin{align*}
    N_h(z,w)=\frac{h(\zeta)-h(\overline{w})}{\zeta-\overline{w}}.
\end{align*}
Then the operator 
\begin{align*}
    S \vcentcolon \{f \in \mathcal{L}(h) \vcentcolon z \cdot f \in \mathcal{L}(h) \} \subset \mathcal{L}(h) \rightarrow \mathcal{L}(h), \quad  S(f)(z)=z \cdot f(z).
\end{align*}
is a model for simple symmetric operators with deficiency index $(1,1)$. 
Moreover,  $S$ has a self-adjoint extension $A$ such that $(A-w)^{-1}$ is the difference quotient operator:
\begin{align*}
(A-w)^{-1}(f)(\zeta)=D_w(f)(\zeta) \vcentcolon =\frac{f(\zeta)-f(w)}{\zeta-w}, \quad f \in \mathcal{L}(h).
\end{align*}
In this particular case, the regular extensions can be described in simple function-theoretic terms. More precisely, let us consider the set of analytic functions
\begin{align*}
     \mathcal{M}(h) \vcentcolon &= \{ g \vcentcolon \C\setminus \R \rightarrow \C \text{ analytic } \ | \  D_w(g) \in \mathcal{L}(h) \quad  \forall w \in \C \setminus \R \}.
\end{align*}
We will show that the space $\mathcal{M}(h)$ consists of quasi-Herglotz Nevanlinna functions and has a simple representation  involving  the (non-negative) measure $\nu$ representing $h$. In any case, any such a function $g \in \mathcal{M}(h)$ defines a regular extension of $S$ via the formula
\begin{align}\label{IntroFor2}
(A_g-w)^{-1}(f)=D_w(f)-\frac{f(w)}{g(w)} \cdot D_w(g), \quad f \in \mathcal{L}(h).
\end{align}
And conversely, every regular extension is of that form. 
Now let us additionally  assume that  $g$ is not identically zero on neither $\C^+$ nor $\C^-$. Then  $A_g$ fits into the following commuting diagram:  
\[\begin{tikzcd}
\mathcal{L}(h) \arrow{r}{(A_g-w)^{-1}} \arrow[swap]{d}{\frac{1}{g}} & \mathcal{L}(h)  \\
\frac{1}{g} \mathcal{L}(h) \arrow{r}{D_w} & \frac{1}{g} \mathcal{L}(h) \arrow{u}{g}
\end{tikzcd}
\]
Here, the multiplication by $\frac{1}{g}$ and $g$ denote the conjugation of a reproducing kernel space. We see that under mild assumptions, a regular extension of a simple symmetric operators acts as the difference quotient operator on a reproducing kernel Hilbert space.
It turns out that many interesting reproducing kernel Hilbert spaces, for example model spaces, can be realized this way. 
\par\smallskip
In summary, we have developed a model for such regular extensions, which is different in nature of the one  developed in 
\cite{SIngPertubation1}. Here, we emphasize that  our approach works without any of the simplifying assumptions on $S$ (that $S$ is simple and that a self-adjoint extension has  only singular spectrum) that were made in \cite{SIngPertubation1}. 
This paper  includes the only complete description of non-self adjoint extensions of a certain class of symmetric operators with deficiency-index $(1,1)$ that we are aware of. 
\par\smallskip
Finally, another interesting approach is looking at proper extensions of dual pairs, which was pioneered by Malamud and Mogilevskii in 2002 \cite{DualPairs}. Given a symmetric relation $S$ with deficiency index $(1,1)$ and some relation $B$ satisfying $S \subset B^*$,  they described all proper extensions $S \subset \tilde{A} \subset B^*$ via a Krein-type formular. 
\section{Preliminaries}
\subsection{Linear relations}
In accordance with the classical self-adjoint extension theory, we will consider linear relations instead of merely linear operators.
Here, a linear relation $A$ on a Hilbert space $\mathcal{H}$  is a linear subspace $A \subset \mathcal{H}\times\mathcal{H}$. 
A linear operator on $\mathcal{H}$ can be identified  with its graph, which is then a linear relation. The spectrum and the resolvent set as well as  the adjoint are defined similarly as for operators,  see e.g. \cite{FuncCalc} for details. In this article, we will mostly work with linear relations in terms of their resolvents, which can be characterized as follows
\cite[Theorem A.13]{FuncCalc}:
\begin{proposition}\label{Prop:PseudoRes0}
    Let $A \subset \mathcal{H}\times\mathcal{H}$ be a closed linear relation such that $\varrho(A)\neq \emptyset.$ Then its resolvent satisfies the resolvent identity: 
    \begin{align*}
        (w-z) \cdot (A-w)^{-1}(A-z)^{-1}=(A-w)^{-1}-(A-z)^{-1}\quad \forall w,z \in \varrho(A).
    \end{align*}
    Conversely, let $\Omega \subset \C$ be a (non-empty) subset of the complex plane and 
    \begin{align*}
        T \vcentcolon \Omega  \rightarrow \mathscr{L}(\mathcal{H})
    \end{align*}
    be a map  
    which takes values in the space  of bounded linear operators on $\mathcal{H}$. Moreover, assume that $T$ 
    satisfies the following identity: 
    \begin{align*}
        (w-z) \cdot T(w)T(z)=T(w)-T(z) \quad \forall w,z \in \Omega.
    \end{align*}
    Then there exists a linear relation $A$ such that $\Omega \subset \varrho(A)$ and $(A-w)^{-1}=T(w)$.
\end{proposition}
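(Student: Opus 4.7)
The plan is to handle both directions with direct manipulations using the graph description of a linear relation.

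For the forward implication, I would fix $w,z \in \varrho(A)$ and an arbitrary $u \in \mathcal{H}$, and set $x=(A-w)^{-1}u$, $x'=(A-z)^{-1}u$. By definition of the resolvent of a relation we then have $(x,u+wx),(x',u+zx')\in A$. Taking the difference gives $(x-x',\, wx-zx')\in A$, and subtracting $w(x-x')$ from the second coordinate yields $(x-x',\,(w-z)x')\in A-w$. Hence $(A-w)^{-1}\bigl((w-z)x'\bigr)=x-x'$, which is exactly the identity
\begin{equation*}
(w-z)(A-w)^{-1}(A-z)^{-1}u=(A-w)^{-1}u-(A-z)^{-1}u.
\end{equation*}

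For the converse, I would fix an arbitrary base point $w_0\in\Omega$ and define the candidate relation
\begin{equation*}
A:=\bigl\{\bigl(T(w_0)y,\,y+w_0T(w_0)y\bigr):y\in\mathcal{H}\bigr\}.
\end{equation*}
This is clearly a linear subspace of $\mathcal{H}\times\mathcal{H}$, and it is closed since from $(x,v)\in A$ the parameter $y=v-w_0x$ is recovered continuously and the defining constraint $x=T(w_0)(v-w_0x)$ then persists under limits because $T(w_0)$ is bounded. The heart of the argument is to verify that $\Omega\subset\varrho(A)$ with $(A-w)^{-1}=T(w)$ for each $w\in\Omega$. For this, note that solving $v-wx=u$ for $(x,v)\in A$ reduces to inverting $B_w:=I+(w_0-w)T(w_0)$ against $u$. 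Using the assumed identity in the form $(w-w_0)T(w)T(w_0)=T(w)-T(w_0)$, a short computation shows
\begin{equation*}
\bigl(I+(w_0-w)T(w_0)\bigr)\bigl(I-(w_0-w)T(w)\bigr)=I,
\end{equation*}
and similarly in the reverse order. Hence $B_w$ is invertible with $B_w^{-1}=I-(w_0-w)T(w)$, giving the unique solution $y=u+(w-w_0)T(w)u$, and the corresponding first coordinate is
\begin{equation*}
x=T(w_0)y=T(w_0)u+(w-w_0)T(w_0)T(w)u=T(w_0)u+\bigl(T(w)-T(w_0)\bigr)u=T(w)u.
\end{equation*}
This proves simultaneously that $A-w$ is bijective with bounded inverse $T(w)$, hence $w\in\varrho(A)$ and $(A-w)^{-1}=T(w)$.

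The main (and really only) obstacle is the clean bookkeeping of signs in the resolvent identity and the verification that $B_w$ and $I-(w_0-w)T(w)$ are mutual inverses; this is the single place where the algebraic content of the hypothesis is truly used, everything else being formal. Note that independence of $A$ from the choice of base point $w_0$ comes for free, since once $(A-w_1)^{-1}=T(w_1)$ is established for every $w_1\in\Omega$, the relation $A$ is uniquely determined by any of its resolvents and so cannot depend on which $w_0$ was used in the construction.
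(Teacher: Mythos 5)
Your proof is correct in both directions: the forward argument is the standard graph manipulation for relation resolvents (and you correctly use that $(A-w)^{-1}$ is single-valued since $w\in\varrho(A)$), and the converse correctly builds $A$ from the base point $w_0$, verifies closedness, and shows that $B_w=I+(w_0-w)T(w_0)$ and $I+(w-w_0)T(w)$ are mutual inverses, which yields $(A-w)^{-1}=T(w)u$ for all $w\in\Omega$. Note that the paper does not prove this proposition at all — it cites it as Theorem A.13 of an external reference — so there is no in-paper argument to compare against; your construction is the standard one for realizing a pseudo-resolvent as the resolvent of a (possibly multivalued) closed relation, with the multivalued part of your $A$ being exactly $\ker T(w_0)$, and it is complete as written.
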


\subsection{Symmetric relations}
In this section, we discuss
the definition and basic properties of symmetric relations. A comprehensive introduction to their their theory is given in e.g. \cite{Beherndt}, where all of the upcoming results can be found. 
\begin{definition}
    A closed relation $S$  on a Hilbert space $\mathcal{H}$ is called symmetric if $S \subset S^*$. 
\end{definition}
Right now, we are  interested in self-adjoint extensions of  symmetric relations. In order for them to exist, $S$ must have equal deficiency indices. They  are defined as follows:
\begin{definition}
Let $S$ be a symmetric relation and $w\in \C\setminus \R$. Then the defect subspace $\eta_w(S)$ is defined as 
\begin{align*}
\eta_w(S)\vcentcolon &=  \mathrm{ran}(S-\overline{w})^\perp= \mathrm{ker}(S^*-w).
\end{align*}
The dimension $\mathrm{dim}(\eta_w(S))$ is constant on $\C^+$ and $\C^-$. This allows us to define the deficiency index of $S$ as $(n,m)=(\mathrm{dim}(\eta_i(S)),\mathrm{dim}(\eta_{-i}(S)))$.
\end{definition}
It is well known that symmetric relations have self-adjoint extensions if and only if they have equal deficiency indices: 
\begin{theorem}
    Let $S$ be a symmetric relation on a Hilbert space $\mathcal{H}$. Then $S$ has a self-adjoint extension if and only if $\mathrm{dim}(\eta_{-i}(S))=\mathrm{dim}(\eta_{i}(S))$. 
\end{theorem}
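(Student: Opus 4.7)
The plan is to use the Cayley transform, which converts symmetric relations into isometric relations and thereby reduces the question of self-adjoint extensions to that of unitary extensions. For a symmetric relation $S$, I would define
$$C(S) := \{(g+if,\, g-if) \colon (f,g) \in S\} \subset \mathcal{H}\times\mathcal{H}.$$
The symmetry $S \subset S^*$ gives $[g,f']_\mathcal{H} = [f,g']_\mathcal{H}$ for all $(f,g),(f',g')\in S$, from which a short computation yields
$$\|g+if\|^2 = \|g-if\|^2 = \|g\|^2 + \|f\|^2.$$
Hence $C(S)$ is (the graph of) a closed isometric operator. Its domain equals $\mathrm{ran}(S+i)$ and its range equals $\mathrm{ran}(S-i)$, so by the very definition of the defect spaces,
$$\mathrm{dom}(C(S))^\perp = \eta_i(S), \qquad \mathrm{ran}(C(S))^\perp = \eta_{-i}(S).$$

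Next, I would verify that $S \mapsto C(S)$ is an inclusion-preserving bijection between closed symmetric relations and closed isometric relations $V$ containing no element of the form $(u,u)$ with $u\neq 0$; its inverse is
$$C^{-1}(V) := \Big\{\Big(\tfrac{u-v}{2i},\, \tfrac{u+v}{2}\Big) \colon (u,v) \in V\Big\}.$$
The key property is that $S$ is self-adjoint exactly when $C(S)$ is unitary, i.e.\ when its domain and range both equal $\mathcal{H}$. Granting this, the theorem follows easily. If $\dim \eta_i(S) = \dim \eta_{-i}(S)$, then we can pick any unitary $U \colon \eta_i(S) \to \eta_{-i}(S)$ and set $W := C(S) \oplus \mathrm{graph}(U)$; then $W$ is a unitary relation extending $C(S)$, so $C^{-1}(W)$ is a self-adjoint extension of $S$. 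Conversely, if $\tilde A$ is a self-adjoint extension of $S$, then $C(\tilde A)$ is a unitary extension of $C(S)$ and restricts to an isometric bijection between $\eta_i(S) = \mathrm{dom}(C(S))^\perp$ and $\eta_{-i}(S) = \mathrm{ran}(C(S))^\perp$, so these spaces have the same dimension.

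The main technical obstacle is not the algebra of the Cayley transform itself but rather the careful handling of the relational framework: one has to verify that $C$ sends $S^*$ to the adjoint of $C(S)$ (defined in the flipped-coordinate sense appropriate to isometric relations), that the multi-valued parts of $S$ and its extensions behave correctly, and that closedness is preserved in both directions of the correspondence. These verifications reduce to componentwise linear algebra on $\mathcal{H}\times\mathcal{H}$, but they deserve to be written out carefully, since the classical treatments (following von Neumann) are usually formulated for operators rather than relations.
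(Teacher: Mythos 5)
Your argument is correct and is the standard von Neumann--Cayley-transform proof; note that the paper itself offers no proof of this statement, but simply quotes it as a classical fact from the literature on linear relations (the reference \cite{Beherndt}), where exactly this Cayley-transform argument is carried out. All the key identities you use check out: symmetry forces $[g,f]$ to be real for $(f,g)\in S$, which gives $\|g+if\|=\|g-if\|$, so $C(S)$ is a well-defined isometric operator with $\mathrm{dom}(C(S))=\mathrm{ran}(S+i)$, $\mathrm{ran}(C(S))=\mathrm{ran}(S-i)$, and hence $\mathrm{dom}(C(S))^\perp=\eta_i(S)$, $\mathrm{ran}(C(S))^\perp=\eta_{-i}(S)$ in the paper's convention $\eta_w(S)=\mathrm{ran}(S-\overline{w})^\perp$; closedness of these ranges follows since $\pm i$ are points of regular type of the closed relation $S$, so the orthogonal-sum construction $W=C(S)\oplus\mathrm{graph}(U)$ indeed produces a unitary extension.

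One small imprecision: the side condition you impose on the isometric side, namely that $V$ contain no element $(u,u)$ with $u\neq 0$, characterizes the Cayley transforms of symmetric \emph{operators}, not of symmetric \emph{relations}. If $(0,u)\in S$ then $(u,u)\in C(S)$, and conversely $(u,u)\in V$ only contributes $(0,u)$ to $C^{-1}(V)$, i.e.\ a multi-valued part, which is perfectly admissible here. Since the theorem is stated for relations and the extension $\tilde{A}$ is allowed to be a proper relation, you should simply drop that condition; as written, your ``bijection'' would fail to be surjective onto closed symmetric relations, and the unitary $W$ you construct may well contain elements $(u,u)$. None of this affects the validity of the equivalence you actually use ($S=S^*$ iff $C(S)$ is unitary, i.e.\ iff $\mathrm{ran}(S\pm i)=\mathcal{H}$), so the proof of the stated theorem goes through.
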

Furthermore, we will need the notion of points of regular type: 
\begin{definition}
    Let $S$ be a symmetric relation. A point $w\in \C$ is called regular, if $(S-w)$ is injective and its inverse is a (not necessarily everywhere defined) bounded operator. We denote the set of regular points by $\gamma(S)$.
\end{definition}
It holds that $\C\setminus \R \subset \gamma(S)$ for every symmetric relation $S$.
Finally, we are particularly interested in simple  symmetric operators, which are defined as follows: 
\begin{definition}
     A symmetric operator $S$ is simple, if the following holds: 
     \begin{align*}
         \overline{\mathrm{span}}\{\eta_{\lambda}(S) \vcentcolon \lambda \in \C \setminus \R \}= \mathcal{H}.
     \end{align*}
\end{definition}

\subsection{Quasi-Herglotz functions}\label{SubSec:QH}
We will establish that regular extensions are closely connected to quasi-Herglotz functions. These functions  naturally extend the class of Herglotz-Nevanlinna functions, and were initially  introduced and studied in \cite{QuasiHN}:  
\begin{definition}\label{Prop:Herglotz}
Let $q \vcentcolon \C\setminus \R \rightarrow \C$ be an analytic function. Then $q$ is a 
\begin{itemize}
    \item Herglotz-Nevanlinna function if   $q(\overline{\zeta})=\overline{q(\zeta)}$ and ${\rm Im } \,q(\zeta)\geq 0$ for ${\rm Im }\,\zeta> 0$,
    \item 
    quasi-Herglotz function if 
\begin{align}\label{QuasiHn}
    q=(h_1-h_2)+i\cdot (h_3-h_4),
\end{align}
where $h_j, \  j \in \{1,2,3,4\}$ are Herglotz-Nevanlinna functions. 
\end{itemize}
\end{definition}
Quasi-Herglotz functions, as Herglotz-Nevanlinna functions,  can be characterized in terms of an integral representation: 
\begin{proposition}\label{Prop:IntRep}
An analytic function $q \vcentcolon \C\setminus \R \rightarrow \C$ is a quasi-Herglotz function if and only if there exist constants $a,b\in \C$ and a finite complex Borel measure  $\nu$ such that
\begin{align*}
    q(\zeta)= a+b \zeta + \int_{\R} \frac{1+t \zeta}{t-\zeta} \cdot \mathrm{d}\nu
\end{align*}
If that is the case, then the data $(a,b, \nu)$ is uniquely determined. 
\end{proposition}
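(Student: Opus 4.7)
The plan is to reduce everything to the classical Herglotz–Nevanlinna integral representation and to combine the four pieces using the decomposition \eqref{QuasiHn} from Definition \ref{Prop:Herglotz}.

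For the sufficiency direction I would start from the integral data $(a,b,\nu)$ and split it into four pieces with nonnegative real structure. Write $a=(a_1-a_2)+i(a_3-a_4)$ with $a_j\in\R$, $b=(b_1-b_2)+i(b_3-b_4)$ with $b_j\geq 0$, and $\nu=(\nu_1-\nu_2)+i(\nu_3-\nu_4)$ with each $\nu_j$ finite nonnegative, obtained by applying the Jordan decomposition separately to $\re\nu$ and $\im\nu$. Then the function
\begin{align*}
h_j(\zeta) \vcentcolon = a_j + b_j\zeta + \int_{\R}\frac{1+t\zeta}{t-\zeta}\,\mathrm{d}\nu_j
\end{align*}
is a Herglotz–Nevanlinna function by the classical representation theorem, and a direct substitution yields $q=(h_1-h_2)+i(h_3-h_4)$, showing $q$ is quasi-Herglotz.

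For the necessity direction I would simply invert the argument: given $q=(h_1-h_2)+i(h_3-h_4)$, I apply the classical Herglotz–Nevanlinna theorem to each $h_j$ to obtain $(a_j,b_j,\nu_j)$, and then the assembled triple $\bigl((a_1-a_2)+i(a_3-a_4),\,(b_1-b_2)+i(b_3-b_4),\,(\nu_1-\nu_2)+i(\nu_3-\nu_4)\bigr)$ represents $q$.

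The main obstacle is uniqueness, since complex measures do not obey the same positivity-based recovery as nonnegative ones. I would prove uniqueness by linearity from the classical Stieltjes inversion formula. Suppose $(a,b,\nu)$ represents the zero function. First, $b=\lim_{y\to+\infty}q(iy)/(iy)=0$, because $\frac{1+t(iy)}{t-iy}\cdot\frac{1}{iy}\to 0$ pointwise and is uniformly bounded in $t$ for $y\geq 1$, so dominated convergence applies to the finite complex measure $\nu$. Next, for the measure part, I would split $\nu$ into its four nonnegative components $\nu_j$ as above; each of the associated Herglotz functions $h_j$ is determined by its boundary behavior via the Stieltjes inversion formula, and the combination
\begin{align*}
\lim_{\varepsilon\downarrow 0}\frac{1}{\pi}\int_{x_1}^{x_2}\operatorname{Im}\bigl((h_1-h_2)+i(h_3-h_4)\bigr)(x+i\varepsilon)\,\mathrm{d}x
\end{align*}
recovers precisely the real part of $\nu$ on intervals of continuity, while the analogous formula for the real part recovers the imaginary part of $\nu$. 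Since $q\equiv 0$, these boundary values vanish, hence $\nu=0$; finally $a=q(i)-i\nu(\R)\cdot(\text{computed integral})=0$. Uniqueness of a general representation follows by applying this to the difference of two supposed representations.
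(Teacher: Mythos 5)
The paper itself does not prove this proposition; it is quoted from \cite{QuasiHN} as a known fact, so your proposal is judged on its own terms. Both implications of the characterization are fine: decomposing $a$, $b$ and $\nu$ (via Jordan decomposition of $\re\nu$ and ${\rm Im}\,\nu$) into four nonnegative pieces, invoking the classical Herglotz--Nevanlinna representation for each piece, and using linearity is exactly the right argument in both directions, and your determination of $b$ from the asymptotics $q(iy)/(iy)$ is correct (the kernel is bounded by $1$ for $y\geq 1$, so dominated convergence applies against the finite measure $|\nu|$).

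The gap is in the uniqueness step. The one-sided inversion formula you write down, $\lim_{\varepsilon\downarrow 0}\frac{1}{\pi}\int_{x_1}^{x_2}{\rm Im}\,q(x+i\varepsilon)\,\mathrm{d}x$, does \emph{not} recover $\re\nu$ when $\nu$ is complex: writing $q=(h_1-h_2)+i(h_3-h_4)$, one has ${\rm Im}\,q={\rm Im}\,h_1-{\rm Im}\,h_2+\re h_3-\re h_4$, and the terms $\re h_3-\re h_4$ are Hilbert-transform (conjugate-function) contributions that do not integrate to zero in general. Concretely, take $\mathrm{d}\nu=-\frac{1}{2\pi i}\frac{\mathrm{d}t}{1+t^2}$ and $a=\frac12$, $b=0$; a residue computation gives $q\equiv 0$ on $\C^+$ and $q\equiv 1$ on $\C^-$, so both of your boundary integrals over $\C^+$ vanish identically even though $\nu\neq 0$. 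Your formulas only see the upper half-plane and therefore cannot distinguish this $q$ from the zero data; the step ``these boundary values vanish, hence $\nu=0$'' does not follow. The repair is standard: use the two-sided Stieltjes inversion
\begin{align*}
\lim_{\varepsilon\downarrow 0}\frac{1}{2\pi i}\int_{x_1}^{x_2}\bigl(q(x+i\varepsilon)-q(x-i\varepsilon)\bigr)\,\mathrm{d}x,
\end{align*}
which recovers $(1+t^2)\,\mathrm{d}\nu$ on intervals of continuity because for each Herglotz summand $h_j(x-i\varepsilon)=\overline{h_j(x+i\varepsilon)}$, so the difference reduces to $2i\,{\rm Im}\,h_j(x+i\varepsilon)$ and the classical inversion applies linearly. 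Alternatively, use the symmetry $h(\overline{\zeta})=\overline{h(\zeta)}$ of the real combinations $h_1-h_2$ and $h_3-h_4$ to conclude from $q\equiv 0$ on \emph{both} half-planes that $h_1\equiv h_2$ and $h_3\equiv h_4$ separately, and then quote uniqueness for (positive) Herglotz--Nevanlinna data. Either way, once $\nu=0$ and $b=0$, the constant $a=q(i)=0$ is immediate.
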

It is well known \cite[Theorem 3.20]{Teschl2009MathematicalMechanics}, that a quasi-Herglotz function  $q$ has an analytic continuation through an interval $(a,b)\subset \R$ if and only if 
\begin{align*}
    (a,b) \subset \R \setminus \sigma(|\nu|).
\end{align*}
Here, $|\nu|$ denotes the total variation of $\nu$ and  $\sigma(|\nu|)$  the topological support, which is given by the following formula:  
\begin{align*}
 \sigma(|\nu|)\vcentcolon = \{x \in \R \vcentcolon |\nu|(O)>0 \text{ for every open neighbourhood $O$ of $x$} \}.
 \end{align*}
We denote the domain of analyticity by $\mathrm{dom}(q) \vcentcolon = \C \setminus \sigma(|\nu|)$.

\subsection{Associated $\varphi$-fields}
In this section, we review some basic properties of  $\varphi$-fields. These  are commonly used in the self-adjoint extension theory. Let us fix a symmetric relation $S$ with deficiency index $(1,1)$ and a self-adjoint extension $S\subset A$. In order to stay consistent with our notation throughout this article, we fix $i\in \varrho(A)$ as the base point of our analysis.
\begin{definition}\label{Def:Gammafield}
Let $v \in \mathcal{H}$ be arbitrary. Then we define the $\varphi$-field $\varphi_v$ associated to $v$ as follows: 
\begin{align*}
\varphi_v(w) := \left(I+(w-i)(A-w)^{-1} \right) v  \quad   \forall   w \in \varrho(A).
\end{align*}
Note that $\varphi_v(i)=v$. Moreover, if $v \in \eta_{i}(S)$ is a defect element, then it holds that $\varphi_v(w) \in \eta_{w}(S)$ for every $w\in \varrho(A)$.
\end{definition}
A standard calculation involving the resolvent identity yields the following identity: 
\begin{align}\label{Basic:Formula2}
    \varphi_v(w) := \left(I+(w-z)(A-w)^{-1} \right) \varphi_v(z)  \quad   \forall  z, w \in \varrho(A).
\end{align}
In other words, the notion of a $\varphi$-field does not depend on the choice of the base point. We record the following identity for later reference: 
\begin{align}\label{Basic:formula}
    \frac{\varphi_v(w)-\varphi_v(z)}{w-z}=(A-w)^{-1}  \varphi_v(z), \quad   \forall  z, w \in \varrho(A). 
\end{align}

\section{Extensions of symmetric relations}
Our standing assumption in this section is that $S$ is a symmetric relation with deficiency index $(1,1)$. Moreover, let us fix a self-adjoint extension $A$ and a defect element $\phi \in \eta_i(S)\setminus\{0\}$. Our theory does not depend on these particular choices, they  should rather be interpreted as a choice of a reference framework.  In any case, our goal is to characterize all extensions with nonempty resolvent set, which we will  call regular henceforth: 
\begin{definition}
    A  closed linear relation $\tilde{A}$ on $\mathcal{H}$ is a regular extension of $S$ if  $\varrho(\tilde{A})\neq \emptyset$ and $S \subset \tilde{A}$.
\end{definition}
\subsection{The $Q$-functions}
Now let us fix some target element $v\in \mathcal{H}$. 
Our first goal is  to define the associated $Q$-functions:
\begin{definition}
    A function $q \vcentcolon \varrho(A) \rightarrow \C$ is called $Q$-function (associated to $v$) if the following holds:
    \begin{align}\label{For:Q-function}
    \frac{q(\zeta)-q(\overline{w})}{\zeta-\overline{w}}= [\varphi_v(\zeta), \varphi_\phi(w)]_\mathcal{H}
    ,  \quad \forall \zeta,w \in \varrho(A).
\end{align}
\end{definition}
This is a straightforward generalization of the idea behind the classical $Q$-function introduced by Krein and Langer in the 1970`s \cite{KrienLanger2}. 
\par\smallskip
It is clear by construction, that $Q$-functions are unique up to an additive complex constant. The existence of such functions is the subject of the next Lemma: 
\begin{lemma}
    A function $q \vcentcolon \varrho(A) \rightarrow \C$ is a  $Q$-function (associated to $v$) if (and only if) the following holds:
    \begin{align}\label{for;4389}
    \frac{q(\zeta)-q(\overline{i})}{\zeta-\overline{i}}= [\varphi_v(\zeta), \varphi_\phi(i)]_\mathcal{H}
    ,  \quad \forall \zeta \in \varrho(A).
\end{align}
 In particular, for every $c \in \C$ the following function is a $Q$-function: 
 \begin{align*}
     q(\zeta) = c+(\zeta-\overline{i}) \cdot [\varphi_v(\zeta), \varphi_\phi(i)]_\mathcal{H}.
 \end{align*}
 And conversely, every $Q$-function is of that form. 
\end{lemma}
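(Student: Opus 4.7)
My plan is to split the lemma into the two implications. The ``only if'' direction is immediate: any $Q$-function satisfies (\ref{For:Q-function}) for all $w$, so specializing $w=i$ yields (\ref{for;4389}) at once. All of the substance lies in the converse—that merely checking the defining identity at the single point $w=i$ already forces it at every $w\in\varrho(A)$.

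For that converse, the strategy is to propagate the $w=i$ identity to arbitrary $w$ by exploiting the resolvent structure of $\varphi_\phi$. From Definition \ref{Def:Gammafield} we have $\varphi_\phi(w)=\phi+(w-i)(A-w)^{-1}\phi$, so
$[\varphi_v(\zeta),\varphi_\phi(w)]_\mathcal{H}=[\varphi_v(\zeta),\phi]_\mathcal{H}+(\overline{w}-\overline{i})\,[\varphi_v(\zeta),(A-w)^{-1}\phi]_\mathcal{H}$.
Since $A$ is self-adjoint I move the resolvent to the first slot as $(A-\overline{w})^{-1}$, and then apply (\ref{Basic:formula}) at the point $\overline{w}$ to rewrite $(A-\overline{w})^{-1}\varphi_v(\zeta)$ as the difference quotient $\frac{\varphi_v(\overline{w})-\varphi_v(\zeta)}{\overline{w}-\zeta}$. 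The inner product then only involves the pairings $[\varphi_v(\zeta),\phi]_\mathcal{H}$ and $[\varphi_v(\overline{w}),\phi]_\mathcal{H}$, both of which are controlled by the hypothesis (\ref{for;4389}) because $\phi=\varphi_\phi(i)$.

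Substituting the two values supplied by (\ref{for;4389}) reduces the claim to a purely algebraic identity in the scalars $\alpha=\frac{q(\zeta)-q(\overline{i})}{\zeta-\overline{i}}$ and $\beta=\frac{q(\overline{w})-q(\overline{i})}{\overline{w}-\overline{i}}$. Specifically, the previous step expresses $[\varphi_v(\zeta),\varphi_\phi(w)]_\mathcal{H}$ as $\alpha+\frac{\overline{w}-\overline{i}}{\zeta-\overline{w}}(\alpha-\beta)$, and clearing denominators gives exactly $\frac{\alpha(\zeta-\overline{i})-\beta(\overline{w}-\overline{i})}{\zeta-\overline{w}}=\frac{q(\zeta)-q(\overline{w})}{\zeta-\overline{w}}$ via the telescoping $q(\zeta)-q(\overline{w})=(q(\zeta)-q(\overline{i}))-(q(\overline{w})-q(\overline{i}))$. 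This establishes (\ref{For:Q-function}).

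The ``in particular'' statement and its converse are then immediate consequences of the equivalence. Setting $q(\zeta):=c+(\zeta-\overline{i})[\varphi_v(\zeta),\varphi_\phi(i)]_\mathcal{H}$ yields $q(\overline{i})=c$ and hence satisfies (\ref{for;4389}) tautologically, so $q$ is a $Q$-function; conversely, if $q$ is any $Q$-function then (\ref{for;4389}) rearranges into that form with $c=q(\overline{i})$. The only real obstacle I anticipate is careful bookkeeping of conjugates and sign conventions when evaluating (\ref{Basic:formula}) at the complex-conjugated point $\overline{w}$—once tracked correctly, the argument collapses to the elementary algebraic identity described above.
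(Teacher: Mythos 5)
Your proposal is correct and follows essentially the same route as the paper's proof: expand $\varphi_\phi(w)=\left(I+(w-i)(A-w)^{-1}\right)\varphi_\phi(i)$, move the resolvent to the first slot by self-adjointness, rewrite $(A-\overline{w})^{-1}\varphi_v(\zeta)$ via the difference-quotient identity \eqref{Basic:formula}, and reduce everything to the telescoping algebraic identity in $\alpha$ and $\beta$. The bookkeeping of conjugates you flag as the only risk is handled exactly as in the paper, and your closing remarks on the ``in particular'' statement match the intended reading.
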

\begin{proof}
Let us assume that \eqref{for;4389} holds. Then we calculate: 
\begin{align*}
    [\varphi_v(\zeta), \varphi_\phi(w)]_\mathcal{H}&=
    [\varphi_v(\zeta), \left(I+(w-i)(A-w)^{-1} \right)  \varphi_\phi(i)]_\mathcal{H}\\
    &=[\left(I+(\overline{w}-\overline{i})(A-\overline{w})^{-1} \right)  \varphi_v(\zeta), \varphi_\phi(i)]_\mathcal{H}\\
    &=[ \varphi_v(\zeta), \varphi_\phi(i)]_\mathcal{H}+(\overline{w}-\overline{i}) \cdot [(A-\overline{w})^{-1}   \varphi_v(\zeta), \varphi_\phi(i)]_\mathcal{H}\\
    &=[ \varphi_v(\zeta), \varphi_\phi(i)]_\mathcal{H}+(\overline{w}-\overline{i}) \cdot \left[\frac{\varphi_v(\overline{w})-\varphi_v(\zeta)}{\overline{w}-\zeta}, \varphi_\phi(i)\right]_\mathcal{H}\\
    &=\frac{q(\zeta)-q(\overline{i})}{\zeta-\overline{i}}+
    \frac{(\overline{w}-\overline{i})}{\overline{w}-\zeta} \cdot 
    \left(\frac{q(\overline{w})-q(\overline{i})}{\overline{w}-\overline{i}}
    - \frac{q(\zeta)-q(\overline{i})}{\zeta-\overline{i}}\right)\\
    &=\frac{q(\zeta)-q(\overline{w})}{\zeta-\overline{w}}
\end{align*}
Here, we have used the identities  \eqref{Basic:Formula2} and \eqref{Basic:formula} multiple times. 
\end{proof}
Next, we  prove that $Q$-functions are necessarily Quasi-Herglotz functions:
\begin{theorem}
    There exists a (unique) finite complex Borel measure $\nu$ and a unique constant $b\in \C$ such that the  $Q$-functions associated to $v$ are given by the following formula: 
        \begin{align*}
    q_c(\zeta)
    &=c+b \cdot \zeta +\int_{\R} \frac{1+t \zeta}{t-\zeta} \mathrm{d}\nu(t), \quad c \in \C.
\end{align*}
    We will call the function 
    \begin{align}\label{FOr4}
    q(\zeta)
    &=b \cdot \zeta +\int_{\R} \frac{1+t \zeta}{t-\zeta} \mathrm{d}\nu(t)
\end{align}
normalized $Q$-function henceforth. 
\end{theorem}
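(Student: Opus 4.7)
The plan is to take as starting point the explicit formula furnished by the previous lemma, namely
\begin{align*}
q(\zeta) \;=\; c + (\zeta+i)\,[\varphi_v(\zeta),\phi]_{\mathcal{H}},
\end{align*}
where we use $\overline{i}=-i$ and $\varphi_\phi(i)=\phi$. The goal is then to rewrite the inner product on the right-hand side in the integral form dictated by Proposition~\ref{Prop:IntRep}. The main tool is the spectral theorem for the self-adjoint relation $A$.

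Concretely, I would orthogonally decompose $\mathcal{H}=\mathcal{H}_0\oplus\mathrm{mul}(A)$ and split $A=A_{\mathrm{op}}\oplus A_\infty$ into its operator and purely multi-valued parts, with $A_{\mathrm{op}}$ a self-adjoint operator on $\mathcal{H}_0$. Writing $v=v_0+v_\infty$ and $\phi=\phi_0+\phi_\infty$ according to this decomposition, and using that $(A-\zeta)^{-1}$ annihilates $\mathrm{mul}(A)$ and coincides with $(A_{\mathrm{op}}-\zeta)^{-1}$ on $\mathcal{H}_0$, the spectral resolution $E$ of $A_{\mathrm{op}}$ yields
\begin{align*}
\varphi_v(\zeta)\;=\;v_\infty+\int_\R\frac{t-i}{t-\zeta}\,dE(t)\,v_0.
\end{align*}
Defining $\nu(\cdot):=[E(\cdot)v_0,\phi_0]_{\mathcal{H}_0}$, which is a finite complex Borel measure of total variation at most $\|v_0\|\,\|\phi_0\|$ by Cauchy--Schwarz, the pairing with $\phi$ becomes
\begin{align*}
[\varphi_v(\zeta),\phi]_{\mathcal{H}}\;=\;\int_\R\frac{t-i}{t-\zeta}\,d\nu(t)+[v_\infty,\phi_\infty]_{\mathcal{H}}.
\end{align*}

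The last step is then purely algebraic. The elementary identity $(\zeta+i)(t-i)=(1+t\zeta)+i(t-\zeta)$ produces $(\zeta+i)\int_\R \tfrac{t-i}{t-\zeta}\,d\nu(t)=\int_\R\tfrac{1+t\zeta}{t-\zeta}\,d\nu(t)+i\nu(\R)$, and combining with the multi-valued contribution rewrites $q$ as
\begin{align*}
q_c(\zeta)\;=\;c'+b\,\zeta+\int_\R\frac{1+t\zeta}{t-\zeta}\,d\nu(t),
\end{align*}
with $b=[v_\infty,\phi_\infty]_{\mathcal{H}}$ and $c':=c+i\nu(\R)+i[v_\infty,\phi_\infty]_{\mathcal{H}}$ still ranging over all of $\C$ as $c$ does. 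Uniqueness of $b$ and $\nu$ then follows directly from the uniqueness part of Proposition~\ref{Prop:IntRep}. The only point that requires genuine care is the bookkeeping around $\mathrm{mul}(A)$: this is exactly what produces the linear term $b\zeta$, which vanishes whenever $A$ is an honest self-adjoint operator or whenever $\phi$ has no component in $\mathrm{mul}(A)$. Conceptually, the fact that $\nu$ is allowed to be a genuine complex measure (rather than non-negative, as in the classical case $v=\phi$) is precisely what promotes $q$ from a Herglotz--Nevanlinna to a quasi-Herglotz function.
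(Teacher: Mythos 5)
Your proof is correct, but it takes a genuinely different route from the paper. The paper never touches the spectral theorem directly: it polarizes the pairing $[\varphi_v(\zeta),\varphi_\phi(i)]_{\mathcal H}$ into the four diagonal terms $[\varphi_x(\zeta),\varphi_x(i)]_{\mathcal H}$ with $x\in\{v\pm\phi,\,v\pm i\phi\}$, invokes the classical Krein--Langer fact that each diagonal term is the difference quotient of a Herglotz--Nevanlinna function, concludes that $q=\tfrac14(h_1-h_2+ih_3-ih_4)$ is quasi-Herglotz by the very definition of that class, and then cites Proposition~\ref{Prop:IntRep} as a black box for the integral representation. Your argument instead splits $A=A_{\mathrm{op}}\oplus A_\infty$, computes $\varphi_v(\zeta)=v_\infty+\int_\R\frac{t-i}{t-\zeta}\,dE(t)v_0$, and reads off the representation directly; the algebra $(\zeta+i)(t-i)=(1+t\zeta)+i(t-\zeta)$ and the total-variation bound $\|\nu\|\le\|v_0\|\,\|\phi_0\|$ are both right. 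What your approach buys is explicitness: you identify $\nu=[E(\cdot)v_0,\phi_0]$ and $b=[v_\infty,\phi_\infty]_{\mathcal H}$ concretely, which immediately shows $\mathrm{supp}\,|\nu|\subset\sigma(A)$, that $b=0$ whenever $A$ is an operator, and recovers the classical non-negative case $v=\phi$ as a byproduct. What the paper's softer route buys is independence from the spectral calculus for relations (all the $\mathrm{mul}(A)$ bookkeeping is hidden inside the cited classical result) and a proof that the same polarization trick works verbatim in settings where one only knows the diagonal terms are Herglotz--Nevanlinna. The one point you should make explicit is that the uniqueness claim concerns the pair $(b,\nu)$ across the whole one-parameter family $\{q_c\}$, which follows because any two $Q$-functions differ by a constant and the data $(a,b,\nu)$ in Proposition~\ref{Prop:IntRep} is unique; you say this, so the proof is complete.
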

The reason for defining a normalized $Q$-function is that we prefer to work with \emph{one} function $q$ \emph{and} a parameter $c \in \C$ instead of a set of functions. This is in accordance with (most of) the literature dealing with the self-adjoint extension theory.
\par\smallskip
A $Q$-function $q$ is uniquely determined by prescribing the constant $c$. However,   
some care should be taken choosing this constant when defining the normalized $Q$-function. Indeed, if $\phi=v$, then we want to end up  with a Herglotz-Nevanlinna function. We will see in the proof, that in that case $\nu\geq 0$ and $b\geq 0$. Consequently, we have to pick a  constant $c \in  [0,\infty)$. In order to stay consistent, we will always choose $c=0$.
\begin{proof}
    First, let $x\in \mathcal{H}$ be an arbitrary element. Then the function 
    \begin{align*}
        h(\zeta) \vcentcolon = -i \cdot [\varphi_x(i), \varphi_x(i)]_\mathcal{H}+(\zeta-\overline{i}) \cdot [\varphi_x(\zeta), \varphi_x(i)]_\mathcal{H}.
    \end{align*}
    is a Herglotz-Nevanlinna function \cite{KrienLanger2}. Moreover, this function satisfies the following identity:  
        \begin{align*}
    \frac{h(\zeta)-h(\overline{i})}{\zeta-\overline{i}}= [\varphi_x(\zeta), \varphi_x(i)]_\mathcal{H} \quad \forall \zeta \in \varrho(A). 
\end{align*}
Next, we polarize the inner product $[\varphi_v(\zeta), \varphi_\phi(i)]_\mathcal{H}$ in the obvious way: 
\begin{align*}
    [\varphi_v(\zeta), \varphi_\phi(i)]_\mathcal{H}=\frac{1}{4} \cdot \bigg(&[\varphi_{v+\phi}(\zeta), \varphi_{v+\phi}(i)]_\mathcal{H}-[\varphi_{v-\phi}(\zeta), \varphi_{v-\phi}(i)]_\mathcal{H}\\
    &+i \cdot [\varphi_{v+i \cdot \phi}(\zeta), \varphi_{v+i \cdot \phi}(i)]_\mathcal{H}-i\cdot [\varphi_{v-i \cdot \phi}(\zeta), \varphi_{v-i \cdot \phi}(i)]_\mathcal{H}\bigg).
\end{align*}
Here, we have used the obvious identities $\varphi_{x+y}=\varphi_{x}+\varphi_{y}$ and $\varphi_{i \cdot x}= i \cdot \varphi_{ x}$ (see Definition \ref{Def:Gammafield}). In any case, we can find four Herglotz-Nevanlinna functions $h_1, h_2, h_3, h_4$ such that 
\begin{align*}
    [\varphi_v(\zeta), \varphi_\phi(i)]_\mathcal{H}
    &=\frac{1}{4} \cdot (D_i(h_1)-D_i(h_2)+i \cdot D_i(h_3)-i \cdot D_i(h_4)) \\
    &= D_i\left(\frac{1}{4} \cdot (h_1-h_2+i \cdot h_3 -i \cdot h_4)\right). 
\end{align*}
Consequently, we see that the Quasi-Herglotz function $q=\frac{1}{4} \cdot (h_1-h_2+i \cdot h_3 -i \cdot h_4)$ is a $Q$-function. This $q$ functions possesses  an integral representation 
\begin{align*}
    q(\zeta)
    &=a+b \cdot \zeta +\int_{\R} \frac{1+t \zeta}{t-\zeta} \mathrm{d}\nu(t). 
\end{align*}
where $a,b$ and $\nu$ are uniquely determined. Every other $Q$-function is given by adding a complex constant. This  completes the proof.  
\end{proof}

\subsection{The main theorem}
We are ready to state our main theorem now. Recall 
our standing assumptions  that $S$ is a symmetric relation with deficiency index $(1,1)$, $A$ is a self-adjoint extension, and $\phi \in \eta_i(S)\setminus\{0\}$ is a defect element. Moreover, we write  $\mathscr{L}(\mathcal{H})$ for  the space of bounded linear operators. 
\begin{theorem}\label{Prop:PseudoRes} Let $v\in \mathcal{H}$ be a target element, $q$  the  associated normalized $Q$-function and $c \in \C$ be such that $q+c$ is not identically zero.  We  define the following operator valued map:
    \begin{align*}
    T_{v,c} \vcentcolon \varrho(A) \setminus \{\zeta \vcentcolon q(\zeta)+c\neq 0\} \rightarrow \mathscr{L}(\mathcal{H}), \quad
    T_{v,c}(\zeta) = (A-\zeta)^{-1}-\frac{[\ \cdot \ , \varphi_\phi(\overline{\zeta}) ]_\mathcal{H}}{q(\zeta)+c} \cdot \varphi_v(\zeta) 
    \end{align*}
    Then there exists a linear relation $\tilde{A}$ such that 
    \begin{align*}
        \varrho(A) \setminus \{\zeta \vcentcolon q(\zeta)+c\neq 0\} \subset \varrho(\tilde{A}), \quad (\tilde{A}-\zeta)^{-1}=T_{v,c}(\zeta). 
    \end{align*}
     Moreover, $\tilde{A}$ is a regular extension of $S$.
     \par\smallskip
     Conversely, let $\tilde{A}$ be a regular  extension of $S$. Then there exists 
     \begin{itemize}
         \item  a target vector $v \in \mathcal{H}$ with associated normalized $Q$-function $q$
         \item a constant $c\in \C$ 
     \end{itemize}
     such that the function $q+c$ is not identically zero, $\varrho(A) \setminus \{\zeta \vcentcolon q(\zeta)+c\neq 0\} \subset \varrho(\tilde{A})$ and 
     \begin{align*}
         (\tilde{A}-\zeta)^{-1}=(A-\zeta)^{-1}-\frac{[\ \cdot \ , \varphi_\phi(\overline{\zeta}) ]_\mathcal{H}}{q(\zeta)+c} \cdot \varphi_v(\zeta).
     \end{align*}
\end{theorem}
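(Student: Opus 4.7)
My plan is to reduce both directions to Proposition~\ref{Prop:PseudoRes0}. For the forward direction, I would verify that the family $T_{v,c}(\zeta)$ satisfies the resolvent identity $(w-z)T_{v,c}(w)T_{v,c}(z) = T_{v,c}(w) - T_{v,c}(z)$ on its domain. Expanding both sides, the resolvent identity for $A$ handles the pure $(A-\zeta)^{-1}$ part; the remaining rank-one cross terms simplify via the $\varphi$-field identities \eqref{Basic:Formula2} and \eqref{Basic:formula} (the latter rewriting $(A-w)^{-1}\varphi_v(z)$ as a $\varphi_v$-difference quotient), together with $A=A^*$ to transfer $(A-z)^{-1}$ across the inner product onto $\varphi_\phi(\overline{w})$. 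The $Q$-function identity \eqref{For:Q-function} then collapses what remains. Proposition~\ref{Prop:PseudoRes0} now produces a closed relation $\tilde{A}$ with the prescribed resolvent. The inclusion $S\subset\tilde{A}$ is immediate: for $(f,g)\in S$, the element $h := g-\zeta f$ lies in $\mathrm{ran}(S-\zeta) = \eta_{\overline{\zeta}}(S)^\perp$, so $[h,\varphi_\phi(\overline{\zeta})]_\mathcal{H} = 0$, the rank-one correction vanishes, and $T_{v,c}(\zeta)h = (A-\zeta)^{-1}h = f$.

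For the converse, let $\tilde{A}$ be regular and set $B(\zeta) := (A-\zeta)^{-1} - (\tilde{A}-\zeta)^{-1}$ on the nonempty intersection $\varrho(A) \cap \varrho(\tilde{A})$ (nonempty because $\varrho(A) \supset \C\setminus\R$ and $\varrho(\tilde{A})$ is open in $\C$). Since $A$ and $\tilde{A}$ both extend $S$, they agree on $\mathrm{ran}(S-\zeta) = \eta_{\overline{\zeta}}(S)^\perp$, so $B(\zeta)$ has rank at most one and decomposes as $B(\zeta)f = [f,\varphi_\phi(\overline{\zeta})]_\mathcal{H}\cdot w(\zeta)$ for a unique $w(\zeta)\in\mathcal{H}$. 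If $B\equiv 0$ then $\tilde{A}=A$ and the choice $v=0$ with any $c\neq 0$ works. Otherwise, pick $\zeta_0$ with $w(\zeta_0)\neq 0$, let $v$ be the unique element satisfying $\varphi_v(\zeta_0) = w(\zeta_0)$, and combine the resolvent identities of $A$ and $\tilde{A}$ to obtain
\begin{equation*}
B(\zeta) = \bigl(I + (\zeta-\zeta_0)(A-\zeta)^{-1}\bigr)B(\zeta_0) + (\zeta-\zeta_0)\,B(\zeta)(\tilde{A}-\zeta_0)^{-1}.
\end{equation*}
Substituting the rank-one decompositions of $B(\zeta)$ and $B(\zeta_0)$ and observing (by a direct computation on $f = s'-\zeta_0 s$ with $(s,s')\in S$) that the linear functional $f \mapsto [f,\varphi_\phi(\overline{\zeta})]_\mathcal{H} - (\zeta-\zeta_0)[(\tilde{A}-\zeta_0)^{-1}f,\varphi_\phi(\overline{\zeta})]_\mathcal{H}$ vanishes on $\mathrm{ran}(S-\zeta_0)$, this functional must be a scalar multiple of $[\,\cdot\,,\varphi_\phi(\overline{\zeta_0})]_\mathcal{H}$. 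Comparing coefficients forces $w(\zeta) = \varphi_v(\zeta)/p(\zeta)$ for a scalar analytic function $p$ with $p(\zeta_0)=1$.

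Finally, inserting the resulting representation $(\tilde{A}-\zeta)^{-1} = (A-\zeta)^{-1} - [\,\cdot\,,\varphi_\phi(\overline{\zeta})]_\mathcal{H}\,\varphi_v(\zeta)/p(\zeta)$ into the resolvent identity of $\tilde{A}$ and running the forward-direction computation in reverse, the only consistent choice forces $(p(\zeta)-p(\overline{w}))/(\zeta-\overline{w}) = [\varphi_v(\zeta),\varphi_\phi(w)]_\mathcal{H}$. Uniqueness of $Q$-functions up to an additive complex constant then yields $p = q+c$ with $q$ the normalized $Q$-function associated to $v$, and $q+c\not\equiv 0$ because $\varrho(\tilde{A})$ is nonempty and $p$ cannot vanish where $(\tilde{A}-\zeta)^{-1}$ exists. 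I expect the main obstacle to be the step in the second paragraph that extracts the scalar-valued $\varphi$-field structure of $w(\zeta)$: the rank-one information alone only specifies a direction at each point, and synchronizing these directions across $\zeta$ requires the combined use of both resolvent identities together with the vanishing check on $\mathrm{ran}(S-\zeta_0)$. Once that step is settled, the $Q$-function property of $p$ is essentially forced.
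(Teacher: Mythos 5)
Your forward direction is essentially the paper's: verify the resolvent identity for $T_{v,c}$ by hand (the paper organizes the computation around the auxiliary scalar $k(\zeta,f)=[f,\varphi_\phi(\overline{\zeta})]_{\mathcal H}/(q(\zeta)+c)$, for which it first proves a resolvent-type identity, but the cancellations are the same ones you describe), then invoke Proposition~\ref{Prop:PseudoRes0} and check $S\subset\tilde{A}$ via $\varphi_\phi(\overline{\zeta})\perp\mathrm{ran}(S-\zeta)$. The converse is where you genuinely diverge. You reconstruct the full $\zeta$-dependence of the rank-one difference $B(\zeta)=(A-\zeta)^{-1}-(\tilde{A}-\zeta)^{-1}$ globally: the combined resolvent identity $B(\zeta)=(I+(\zeta-\zeta_0)(A-\zeta)^{-1})B(\zeta_0)+(\zeta-\zeta_0)B(\zeta)(\tilde{A}-\zeta_0)^{-1}$ is correct, your vanishing check on $\mathrm{ran}(S-\zeta_0)$ does force the functional to be proportional to $[\,\cdot\,,\varphi_\phi(\overline{\zeta_0})]_{\mathcal H}$, and the scalar $p$ you extract (nonvanishing since $\varphi_v(\zeta)\neq 0$ for $v\neq 0$) is then pinned down as a $Q$-function by re-running the forward computation. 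The paper avoids all of this with a one-point argument: it writes $(\tilde{A}-w)^{-1}-(A-w)^{-1}=-[\,\cdot\,,\varphi_\phi(\overline{w})]\cdot y$ at a \emph{single} non-real $w\in\varrho(\tilde{A})$, chooses $v$ with $\varphi_v(w)=y$ and $c$ with $q(w)+c=1$, and then concludes by observing that the relation $\tilde{B}$ produced by the forward direction has the same resolvent as $\tilde{A}$ at that one point, hence $\tilde{A}=\tilde{B}$. Your route buys an explicit description of how the perturbation direction propagates in $\zeta$; the paper's buys brevity and sidesteps the synchronization problem you correctly identify as the main obstacle.

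One loose end: your argument establishes the Krein formula only on $\varrho(A)\cap\varrho(\tilde{A})$ (minus zeros of $p$), whereas the theorem asserts that \emph{every} $\zeta\in\varrho(A)$ with $q(\zeta)+c\neq 0$ lies in $\varrho(\tilde{A})$. This does not follow from your local analysis alone; you need to close the loop exactly as the paper does, by comparing $\tilde{A}$ with the relation $\tilde{B}$ furnished by your own forward direction and using that a closed relation is determined by its resolvent at one point. You have all the ingredients, but the step should be stated. You should also record the degenerate case explicitly: if $B\equiv 0$ then $\tilde{A}=A$, which is covered by $v=0$, $q\equiv 0$, $c\neq 0$, as you note in passing.
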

Note that a regular extension is determined by the pair of parameters $(v,c)$. We split up the proof into two parts. First, we show that $T_{v,c}$ is indeed the resolvent of a  regular extension. 
\begin{proof}[Proof that $T_{v,c}$ is the resolvent of a  regular extension:]  
We are given a target vector $v\in \mathcal{H}$, the normalized $Q$-function $q$ and $c \in \C$ such that $q+c$ is not identically zero.  For the sake of simplicity, let us set $m=q+c$ and $T=T_{v,c}$. Then the definition of $T$ reads as follows: 
\begin{align*}
    T(\zeta) = (A-\zeta)^{-1}-\frac{[\ \cdot \ , \varphi_\phi(\overline{\zeta}) ]_\mathcal{H}}{m(\zeta)} \cdot \varphi_v(\zeta) 
\end{align*}
In view of Proposition \ref{Prop:PseudoRes0}, we first aim at establishing the following identity: 
\color{black}
\begin{align*}
    (w-z) \cdot T(w)T(z)=T(w)-T(z) \quad \text{ for all $w,z \in \varrho(A): m(w)\neq 0$ and $m(\zeta)\neq 0$ }.
\end{align*}
To this end, let us consider the following scalar valued function:
    \begin{align*}
        k \vcentcolon \varrho(A) \setminus \{\zeta \vcentcolon m(\zeta)\neq 0\} \times \mathcal{H}\rightarrow \C, \quad k(\zeta,f) \vcentcolon = \frac{[f,  \varphi_\phi(\overline{\zeta})]_\mathcal{H}}{m(\zeta)}.
    \end{align*}
    This definition allows us to rewrite the definition of $T(\zeta)$ compactly as follows: 
    \begin{align*}
        T(\zeta)(f) = (A-\zeta)^{-1}(f)-k(\zeta,f) \cdot \varphi_v(\zeta). 
    \end{align*}
    In the first step, we show that the function $k$ satisfies the following resolvent type identity: 
    \begin{align}\label{cPseudoRes}
        (w-z) \cdot k(w,T(z)f)&=k(w,f)-k(z,f)
    \end{align}
    This identity is a consequence of the following calculation: 
    \begin{align*}
           (w-z) \cdot &k(w,T(z)f)
         = (w-z) \cdot \frac{[T(z)f,  \varphi_\phi(\overline{w})]_\mathcal{H}}{m(w)} \\
         &= \frac{1}{m(w)} \cdot 
         (w-z)\cdot [(A-z)^{-1}(f)-k(z,f) \cdot \varphi_v(z),  \varphi_\phi(\overline{w})]_\mathcal{H}\\
         &= \frac{1}{m(w)} \cdot \bigg(  [f, (\overline{w}-\overline{z}) \cdot (A-\overline{z})^{-1} \varphi_\phi(\overline{w})]_\mathcal{H}-k(z,f) \cdot (w-z)\cdot [ \varphi_v(z),  \varphi_\phi(\overline{w})]_\mathcal{H} \bigg)\\
         &=\frac{1}{m(w)} \cdot \bigg( 
         [f,  \varphi_\phi(\overline{w})-\varphi_\phi(\overline{z})]_\mathcal{H}-k(z,f) \cdot (m(w)-m(z)) \bigg)\\
         &=\frac{1}{m(w)} \cdot \bigg( m(w) \cdot k(w,f)-m(z) \cdot k(z,f) -k(z,f) \cdot (m(w)-m(z)) \bigg)\\
         &=k(w,f)-k(z,f)
    \end{align*}
    Here, we have used identity \eqref{Basic:formula} and the fact that $m$ is a $Q$-function and therefore satisfies identity \eqref{For:Q-function} in the fourth equality. Moreover, we have used the identity 
    \begin{align*}
        m(\zeta)\cdot k(\zeta,f)=[f,\varphi_\phi(\overline{\zeta})]_{\mathcal{H}}
    \end{align*}
    twice in the fifth equality.  
    Next, let us consider the following identity:
    \begin{align*}
        (w-z) \cdot T(w)T(z)f &=(w-z) \cdot ((A-w)^{-1}T(z)f-k(w,T(z)f) \cdot \varphi_v(w))\\
        &=(w-z) \cdot \big((A-w)^{-1}T(z)f \big)-(k(w,f)-k(z,f)) \cdot \varphi_v(w)
    \end{align*}
    The term $k(w,T(z)f) \cdot \varphi_v(w)$ was easily dealt with using the just established resolvent-type identity for $k$. We simplify the first summand in the following way: 
    \begin{align*}
        (w-z) \cdot (A-w)^{-1}(T(z)f)&=(w-z) \cdot (A-w)^{-1} \left( (A-z)^{-1}(f)-k(z,f) \cdot \varphi_v(z) \right)\\
        &=(A-w)^{-1}f-(A-z)^{-1}f-k(z,f) \cdot (w-z) \cdot (A-w)^{-1} \varphi_v(z)\\
        &=(A-w)^{-1}f-(A-z)^{-1}f -k(z,f) \cdot(\varphi_v(w)-\varphi_v(z))
    \end{align*}
    Here, we have used the resolvent identity in the first equality and identity \eqref{Basic:formula} in the second equality. Finally, we can add up the two summands and obtain the following identity; 
    \begin{align*}
        (w-z) \cdot &T(w)T(z)f= (w-z) \cdot \big((A-w)^{-1}T(z)f\big)-(k(w,f)-k(z,f)) \cdot \varphi_v(w)\\
        &=(A-w)^{-1}f-(A-z)^{-1}f -k(z,f) \cdot(\varphi_v(w)-\varphi_v(z))-(k(w,f)-k(z,f)) \cdot \varphi_v(w)\\
        &=(A-w)^{-1}f-k(w,f)\cdot \varphi_v(w)-\big((A-z)^{-1}f-k(z,f) \cdot \varphi_v(z)\big)\\
        &=T(w)f-T(z)f.
    \end{align*}
    \color{black}
This means that there exists a linear relation $\tilde{A}$ generating $T$.
\par\smallskip
Finally, let $u\in \mathrm{dom}(S)$. Then $u \in \mathrm{dom}(\tilde{A})$ and  $S \subset \tilde{A}$, which is a consequence of the following calculation: 
\begin{align*}
   (\tilde{A}-w)^{-1} (S-w)u &=   (A-w)^{-1}(S-w) u-\frac{[(S-w)u ,  \varphi_\phi(\overline{w}) ]_\mathcal{H}}{q(w)} \cdot  \varphi_v(w)= u.
\end{align*}
Here, we have used that $\varphi_\phi(\overline{w}) \in \eta_{\overline{w}}(S)=(\mathrm{ran}(S-w))^\perp$.
This completes the proof. 
\end{proof}
Now we treat the converse direction: 
\begin{proof}
Let $\tilde{A}$ be a regular  extension of $S$ and let us 
fix some  $w \in \varrho(\tilde{A}) \setminus \R$. Note that such a point $w$ exists, because $\varrho(\tilde{A})$ is open and non-empty, and that $w \in \varrho(A)$.
Since $S \subset \tilde{A}\cap A$, it follows that
\begin{align*}
    \mathrm{ran}(S-w)\subset  \mathrm{ran}((\tilde{A}\cap A)-w)= \mathrm{ker}((A-w)^{-1}-(\tilde{A}-w)^{-1}),
\end{align*}
where the last equality is a consequence of  \cite[Lemma 1.7.2]{Beherndt}. In summary, we arrive at 
\begin{align*}
     \mathrm{ker}((A-w)^{-1}-(\tilde{A}-w)^{-1})^\perp
     \subset \mathrm{ran}(S-w)^\perp=\eta_{\overline{w}}(S).
\end{align*}
Therefore, the dimension of $\mathrm{ker}((A-w)^{-1}-(\tilde{A}-w)^{-1})^\perp$ is either zero or one, since $\eta_{\overline{w}}(S)$ is one-dimensional. If it is zero, then it holds that
\begin{align*}
    \mathrm{ker}((A-w)^{-1}-(\tilde{A}-w)^{-1})=\mathcal{H},
\end{align*}
which implies that $A=\tilde{A}$. Consequently, we can assume without loss of generality that
\begin{align*}
    \mathrm{ker}((A-w)^{-1}-(\tilde{A}-w)^{-1})^\perp= \eta_{\overline{w}}(S).
\end{align*}
As a result, we can decompose $\mathcal{H}$ as 
\begin{align*}
    \mathcal{H} = \eta_{\overline{w}}(S) \oplus \mathrm{ker}((A-w)^{-1}-(\tilde{A}-w)^{-1}).
\end{align*}
Let us consider the defect element $\varphi_\phi(\overline{w})\in \eta_{\overline{w}}(S)$. Then we can find a vector  $y \in \mathcal{H}\setminus\{0\}$  such that we can express $(\tilde{A}-w)^{-1}-(A-w)^{-1}$ as follows: 
\begin{align}\label{For:BeweisMaximality}
    (\tilde{A}-w)^{-1}-(A-w)^{-1}=-[\ \cdot \ , \varphi_\phi(\overline{w})] \cdot y.
\end{align}
Now define the following target vector:
\begin{align*}
    v\vcentcolon=\left(I+(i-w)(A-i)^{-1} \right)y,
\end{align*}
This means that $\varphi_v(w)=y$. 
Moreover, let  $q$ be the  normalized $Q$-function associated  $v$ and choose $c \in \C$ such that $q(w)+c=1$.
 Then equation \eqref{For:BeweisMaximality} simplifies to 
\begin{align*}
    (\tilde{A}-w)^{-1}-(A-w)^{-1}
    &=-[\ \cdot \ ,\varphi_\phi(\overline{w})]_{\mathcal{H}} \cdot \varphi_v(w)
    = -\frac{[\ \cdot \ , \varphi_\phi(\overline{w})]_{\mathcal{H}}}{q(w)+c} \cdot \varphi_v(w).
\end{align*}
Rearranging terms yields the following identity: 
\begin{align*}
    (\tilde{A}-w)^{-1}
    &=(A-w)^{-1} -\frac{[\ \cdot \ , \varphi_\phi(\overline{w})]_{\mathcal{H}}}{q(w)+c} \cdot \varphi_v(w).
\end{align*}
In summary, we have established our Krein-type resolvent formula at one specific point $w$. However,  we had already established in the converse direction of this proof that there exists a regular extension $\tilde{B}$ such that $\varrho(A) \setminus \{\zeta \vcentcolon q(\zeta)+c\neq 0\} \subset \varrho(\tilde{B})$ and
\begin{align*}
    (\tilde{B}-\zeta)^{-1}= (A-\zeta)^{-1} -\frac{[\ \cdot \ , \varphi_\phi(\overline{\zeta})]_{\mathcal{H}}}{q(\zeta)+c} \cdot \varphi_v(\zeta) \quad \forall \zeta \in \varrho(A) \setminus \{\zeta \vcentcolon q(\zeta)+c\neq 0\}.
\end{align*}
Now since $(\tilde{A}-w)^{-1}=(\tilde{B}-w)^{-1}$ it follows that $\tilde{A}=\tilde{B}$.
 This concludes our proof. 
 \color{black}
\end{proof}
We will completely describe the spectrum of a regular extension  of a simple symmetric operator in Chapter \ref{Sec:hfsdljk}.
\par\smallskip
Finally, let us consider the extension corresponding to some pair $(c,v)$ relative the defect element $\phi$. Moreover, let  $\tilde{\phi}= \lambda \cdot \phi \in \eta_i(S)$ be another defect element, where $\lambda \in \C \setminus \{0\}$. Note that $\eta_i(S)$ is one-dimensional. Then it is straightforward to verify that the pair $\left(c,\frac{1}{\lambda} \cdot v\right)$ correspond to the same extension relative the defect element $\lambda \cdot \phi$. 
\par\smallskip
Changing the base extension $A$ results into a more complicated transformation  that also involves the normalized $Q$-function. We refrain from including the details of the transformation here, and just remark that this does not make a difference conceptually.

\section{Extensions of simple symmetric operators}\label{Ext:SimpleSym}
\subsection{The model}
Every simple symmetric operator with deficiency index $(1,1)$ can be realized as the multiplication operator by the independent variable on a  suitable reproducing kernel Hilbert space. Let us first revisit the definition of reproducing kernels:
\begin{definition}
    Let $U \subset \C$ be an open set and $\mathcal H$ a Hilbert space consisting of analytic functions defined on $U$. Then $\mathcal H$ is a reproducing kernel Hilbert space if there exists a function $ k \vcentcolon U \times U \rightarrow \C$
such that the following holds:
\begin{enumerate}
    \item for every $w \in U$ the function $\zeta \mapsto  k(\zeta,w)$ belongs to $\mathcal H$.
    \item for every $f \in \mathcal H$ and $w \in U$ it holds that $f(w)=[f(\cdot),k(\cdot,w)]_{\mathcal{H}}$.
\end{enumerate}
\end{definition}
Equivalently,  reproducing kernel Hilbert spaces are characterized by the continuity of point evaluations. Now consider a  Herglotz-Nevanlinna function $h$ and  its maximal domain $\mathrm{dom}(h)$
(see Section \ref{SubSec:QH}). Such a function gives rise to a reproducing kernel Hilbert space, which we will call Herglotz space \cite[Chapter 1]{Branges1968}:
\begin{proposition}\label{PropHerglotzspace}
Let $h$ be a Herglotz-Nevanlinna function and set 
\begin{align*} 
N_h\vcentcolon \mathrm{dom}(h) \times \mathrm{dom}(h) \rightarrow \C, \quad 
N_h(\zeta,w)
:=\frac{h(\zeta)-\overline{h(w)}}{\zeta-\overline w}, \  \zeta \neq \overline w, \quad
N_h(w,\overline{w})=h'(w). 
\end{align*}
Then there exists a (unique) reproducing kernel Hilbert space $\mathcal{L}(h)$ with kernel $N_h$. 
\par\smallskip
Moreover, the multiplication operator by the independent variable
\begin{align*}
S_h \vcentcolon \{ f \in \mathcal{L}(h) \vcentcolon  \zeta \cdot f(\zeta) \in \mathcal{L}(h)\}\rightarrow \mathcal{L}(h), \quad S(f)(\zeta)= \zeta \cdot f(\zeta)
\end{align*}
\color{black}
is a simple symmetric operator  with deficiency index $(1,1)$. 
\par\smallskip
Finally, the linear relation 
\begin{align*}
A_h \vcentcolon = \{ (f,g)\in \mathcal{L}(h) \times \mathcal{L}(h) \vcentcolon \exists c \in \C \vcentcolon g(\zeta)-\zeta \cdot f(\zeta)\equiv c \}
\end{align*}
is a self-adjoint extension of $S_h$ which satisfies
\begin{align*}
    \mathrm{dom}(h)=\varrho(A_h) \quad \text{ and } \quad
    (A_h-w)^{-1}(f)(\zeta)=D_w(f)(\zeta)=\frac{f(\zeta)-f(w)}{\zeta-w}.
\end{align*} 
\end{proposition}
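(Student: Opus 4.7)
The plan is to handle the three distinct assertions of the proposition in order. First, I would establish positive definiteness of $N_h$ by substituting the Herglotz--Nevanlinna integral representation of $h$ from Proposition \ref{Prop:IntRep} (with $b\ge 0$ and $\nu\ge 0$) and simplifying $h(\zeta)-\overline{h(w)}$; this yields
\[
N_h(\zeta,w) = b + \int_{\R}\frac{1+t^2}{(t-\zeta)(t-\bar w)}\,d\nu(t),
\]
exhibiting $N_h$ as a weighted sum of the rank-one positive definite kernels $(t-\zeta)^{-1}\overline{(t-\bar w)^{-1}}$ and a non-negative constant. The Moore--Aronszajn theorem then supplies $\mathcal{L}(h)$ uniquely.

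The heart of the proof is showing $(A_h-w)^{-1}=D_w$ for $w\in\mathrm{dom}(h)$. I would begin from the algebraic identity
\[
D_w\bigl(N_h(\cdot,z)\bigr)(\zeta) = \frac{1}{\bar z - w}\bigl(N_h(\zeta,z)-N_h(\zeta,\bar w)\bigr),
\]
obtained by placing the two terms of the difference quotient over a common denominator. This demonstrates that $D_w$ maps the dense span of reproducing kernels into $\mathcal{L}(h)$. Using the reproducing property, one then verifies on this dense span the adjoint identity $[D_w N_h(\cdot,z_1), N_h(\cdot,z_2)] = [N_h(\cdot,z_1), D_{\bar w}N_h(\cdot,z_2)]$, from which $D_w$ extends to a bounded operator on $\mathcal{L}(h)$ satisfying $D_w^{*}=D_{\bar w}$. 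A direct algebraic manipulation gives the resolvent identity $(w-z)D_wD_z = D_w-D_z$, and Proposition \ref{Prop:PseudoRes0} thereby produces a relation $\tilde A$ with $(\tilde A-w)^{-1}=D_w$. To identify $\tilde A$ with $A_h$: any $(f,g)\in A_h$ with $g-\zeta f\equiv c$ satisfies $g-wf=(\zeta-w)f+c$ and hence $D_w(g-wf)=f$; conversely, for arbitrary $k\in\mathcal{L}(h)$, setting $f:=D_w k$ and $g:=k+wf$ gives $g-\zeta f = k - (\zeta-w)D_w k = k(w)$, which is constant, so $(f,g)\in A_h$ and $(A_h-w)(f,g)=k$. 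Self-adjointness of $A_h$ then follows from $D_w^{*}=D_{\bar w}$.

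For $S_h$, the inclusion $S_h\subset A_h$ is immediate (take $c=0$), giving symmetry. To determine the deficiency space, I would prove $\mathrm{ran}(S_h-w) = \{k\in\mathcal{L}(h)\,:\,k(w)=0\}$: the inclusion ``$\subset$'' is by evaluating $(\zeta-w)f(\zeta)$ at $\zeta=w$, while ``$\supset$'' follows by setting $f:=D_w k$, so that $\zeta f = k + w f\in\mathcal{L}(h)$ (using $k(w)=0$) and hence $f\in\mathrm{dom}(S_h)$. Since point evaluation at $w$ is a non-trivial continuous functional on $\mathcal{L}(h)$ with squared norm $N_h(w,w)>0$, its kernel has codimension one, and the reproducing property then gives $\eta_{\bar w}(S_h) = \mathrm{span}\{N_h(\cdot,w)\}$, so the deficiency index is $(1,1)$. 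Simplicity is automatic: the defect elements $\{N_h(\cdot,w)\}_{w\in\C\setminus\R}$ are (up to scalars) precisely the reproducing kernels at points of $\C\setminus\R$, and such kernels always span the RKHS densely. The principal technical obstacle is establishing boundedness of $D_w$ on all of $\mathcal{L}(h)$---this is not automatic from the RKHS structure and requires the explicit algebraic formula for $D_w$ on kernels together with the adjoint identity $D_w^{*}=D_{\bar w}$; once this is in hand, the remaining steps are routine verifications.
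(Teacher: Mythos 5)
The paper does not prove this proposition; it is quoted from the literature (de Branges, \cite[Chapter~1]{Branges1968}), so there is no in-paper argument to compare against. Your reconstruction follows the standard route and most of it is sound: the computation $N_h(\zeta,w)=b+\int_{\R}\tfrac{1+t^2}{(t-\zeta)(t-\overline{w})}\,d\nu(t)$ is correct and does give positive definiteness; the identity $D_w N_h(\cdot,z)=\tfrac{1}{\overline{z}-w}\bigl(N_h(\cdot,z)-N_h(\cdot,\overline{w})\bigr)$ checks out, as do the adjoint identity on kernels, the resolvent identity for $D_w$, the identification of $A_h$ with the relation produced by Proposition~\ref{Prop:PseudoRes0}, and the computation $\mathrm{ran}(S_h-w)=\{k:k(w)=0\}$ yielding $\eta_{\overline{w}}(S_h)=\mathrm{span}\{N_h(\cdot,w)\}$ and simplicity (for the latter you should add one line: a function in $\mathcal{L}(h)$ vanishing on $\C\setminus\R$ vanishes identically by analyticity on $\mathrm{dom}(h)$, since kernels at non-real points alone need not span a general RKHS).

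There are two genuine gaps. First, you correctly identify boundedness of $D_w$ as the crux, but the argument you offer does not close it: knowing $D_w$ and $D_{\overline{w}}$ are formal adjoints on the dense span of kernels only shows that $D_w$ is \emph{closable}, not bounded. To finish you need an additional input --- either the closed graph theorem after proving that $D_w$ maps \emph{all} of $\mathcal{L}(h)$ into itself, or (cleanest, and consistent with the paper's toolkit) the unitary $F\colon L^2(\R,\mu)\to\mathcal{L}(h)$ of Proposition~\ref{Prop:RealHnfunctions}, under which $D_w$ becomes multiplication by the bounded function $t\mapsto (t-w)^{-1}$, giving $\|D_w\|\le \mathrm{dist}(w,\sigma(|\nu|))^{-1}$ directly. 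Second, the assertion $\varrho(A_h)=\mathrm{dom}(h)$ is not addressed at all: your kernel computations establish $(A_h-w)^{-1}=D_w$ for non-real $w$, but the claim that the resolvent set is exactly $\C\setminus\sigma(|\nu|)$ (in particular that real points of $\mathrm{dom}(h)$ lie in $\varrho(A_h)$, and that points of $\sigma(|\nu|)$ do not) requires a separate argument, again most easily via the unitary equivalence of $A_h$ with multiplication by $t$ on $L^2(\R,\mu)$ (plus a one-dimensional multivalued part when $b>0$).
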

 It turns out that every simple symmetric operator with deficiency index $(1,1)$ is, up to an isometric isomorphism, of the above form (see e.g. \cite{PartFund}): 
\begin{proposition}\label{Prop:Symop1}
Let $S$ be simple symmetric operator with deficiency index $(1,1)$ on $\mathcal{H}$. Then there exists a Herglotz-Nevanlinna function $h$ and an isometric isomorphism 
\begin{align*}
    U \vcentcolon \mathcal{H} \rightarrow \mathcal{L}(h)
\end{align*}
such that $USU^*=S_h$.  
\end{proposition}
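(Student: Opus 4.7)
The plan is to construct an explicit unitary $U:\mathcal{H}\to\mathcal{L}(h)$ for a suitable Herglotz-Nevanlinna function $h$ and verify that it intertwines $S$ with $S_h$. First I would fix a self-adjoint extension $A\supset S$ and a defect element $\phi\in\eta_i(S)\setminus\{0\}$, and let $h$ be the normalized $Q$-function associated to the target vector $v=\phi$. By the classical Krein-Langer construction \cite{KrienLanger2} (or, equivalently, the argument given in the proof for the normalized $Q$-function when $v=\phi$), this choice makes $h$ a genuine Herglotz-Nevanlinna function whose domain of analyticity contains $\varrho(A)$, and the defining identity \eqref{For:Q-function} becomes
\begin{equation*}
\frac{h(\zeta)-h(\overline{w})}{\zeta-\overline{w}}=[\varphi_\phi(\zeta),\varphi_\phi(w)]_{\mathcal{H}},\qquad \zeta,w\in\varrho(A).
\end{equation*}

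Next I would define $U:\mathcal{H}\to\mathrm{Hol}(\varrho(A))$ by $(Uf)(\zeta):=[f,\varphi_\phi(\overline{\zeta})]_{\mathcal{H}}$ and check two things. A direct computation from the displayed identity gives $(U\varphi_\phi(w))(\zeta)=N_h(\zeta,\overline{w})$, so $U$ sends defect elements of $S$ to reproducing kernels of $\mathcal{L}(h)$. Comparing $[\varphi_\phi(w),\varphi_\phi(z)]_{\mathcal{H}}=(h(w)-h(\overline{z}))/(w-\overline{z})$ (from the same identity) with $[N_h(\cdot,\overline{w}),N_h(\cdot,\overline{z})]_{\mathcal{L}(h)}=N_h(\overline{z},\overline{w})$ (from the reproducing property and the Herglotz symmetry of $h$) then shows that $U$ is isometric on the linear span of $\{\varphi_\phi(w):w\in\C\setminus\R\}$. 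Simplicity of $S$ guarantees that this span is dense in $\mathcal{H}$, since each one-dimensional defect space $\eta_w(S)$ is spanned by $\varphi_\phi(w)$. Consequently $U$ extends to a global isometry whose range is closed and contains the dense set of reproducing kernels of $\mathcal{L}(h)$; hence $U$ is unitary.

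Finally I would verify $USU^*=S_h$. Using self-adjointness of $A$ together with \eqref{Basic:formula},
\begin{equation*}
U((A-w)^{-1}f)(\zeta)=[f,(A-\overline{w})^{-1}\varphi_\phi(\overline{\zeta})]_{\mathcal{H}}=\frac{(Uf)(\zeta)-(Uf)(w)}{\zeta-w}=D_w(Uf)(\zeta),
\end{equation*}
so $U(A-w)^{-1}U^*=(A_h-w)^{-1}$ and therefore $UAU^*=A_h$. Since $S$ is recovered from $A$ via $S=\{(f,g)\in A:g-wf\perp\varphi_\phi(\overline{w})\}$ for any fixed $w\in\varrho(A)$, and $U$ maps $\varphi_\phi(\overline{w})$ to the defect element $N_h(\cdot,w)$ of $S_h$ at $\overline{w}$, the equality $USU^*=S_h$ follows. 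The only real obstacle is careful bookkeeping of complex conjugates when matching $U$ with the Nevanlinna kernel and with the symmetry $h(\overline{\zeta})=\overline{h(\zeta)}$; once these are placed correctly, every step is a mechanical verification.
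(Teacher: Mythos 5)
Your construction is correct, and it is worth noting that the paper itself does not prove this proposition at all: it is stated as a known fact with a pointer to the literature (the reference \cite{PartFund}), so there is no in-text argument to compare against. Your proof supplies the standard functional-model argument that underlies that citation, and it is essentially complete. The key computations all check out: with $v=\phi$ the normalized $Q$-function $h$ satisfies $N_h(\zeta,w)=[\varphi_\phi(\zeta),\varphi_\phi(w)]_{\mathcal H}$ (using $h(\overline w)=\overline{h(w)}$), the map $(Uf)(\zeta)=[f,\varphi_\phi(\overline\zeta)]_{\mathcal H}$ sends $\varphi_\phi(w)$ to $N_h(\cdot,\overline w)$, the Gram matrices match, and simplicity gives density of $\Span\{\varphi_\phi(w):w\in\C\setminus\R\}$, so $U$ is unitary onto $\mathcal L(h)$; the resolvent intertwining $U(A-w)^{-1}U^*=D_w$ then identifies $A$ with $A_h$. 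Two small points deserve a sentence each if you write this up in full. First, you should note why $\varphi_\phi(w)\neq 0$ (the operator $I+(w-i)(A-w)^{-1}$ is invertible), so that $\varphi_\phi(w)$ really spans the one-dimensional space $\eta_w(S)$ and the simplicity hypothesis applies. Second, in the last step the identification $USU^*=S_h$ requires knowing that $\mathrm{ran}(S_h-w)$ is exactly $\{f\in\mathcal L(h):f(w)=0\}=\{N_h(\cdot,w)\}^\perp$, i.e.\ the division property of $\mathcal L(h)$ (if $f(w)=0$ then $f/(\zeta-w)=D_w(f)\in\mathcal L(h)$ and $\zeta\cdot f/(\zeta-w)=f+w f/(\zeta-w)\in\mathcal L(h)$); this is the one piece of the ``mechanical verification'' that is not purely formal, though it is immediate from the $D_w$-invariance of $\mathcal L(h)$.
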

\subsection{The main theorem for simple symmetric operators}
Now we describe the extensions of simple symmetric operators in detail. Recall that for a given analytic function $f$ the difference quotient is denoted by  $D_w(f)$. Moreover, this operation satisfies the resolvent identity: 
\begin{align*}
    D_x(f)-D_w(f)=(x-w)\cdot D_x(D_w(f)).
\end{align*}
Let us define the following set of functions:
\begin{definition}
Let $h$ be a Herglotz-Nevanlinna function. Then we define $\mathcal{M}(h)$  as  
\begin{align*}
     \mathcal{M}(h) \vcentcolon &= \{ g \vcentcolon \mathrm{dom}(h) \rightarrow \C \text{ analytic } \ | \  D_w(g) \in \mathcal{L}(h) \quad  \forall w \in \mathrm{dom}(h) \} \\
     &=\{ g \vcentcolon \mathrm{dom}(h) \rightarrow \C \text{ analytic } \ | \  D_i(g) \in \mathcal{L}(h)   \}
\end{align*}
\end{definition}
Note that $\mathcal{L}(h) \subset \mathcal{M}(h)$, because $\mathcal{L}(h)$ is invariant under the difference quotient operator. Moreover, it is sufficient to demand that $D_i(g) \in \mathcal{L}(h)$, because in that case it holds that
\begin{align*}
    D_w(g)=\left(I+(w-i) \cdot D_w \right)D_i(g) \in  \mathcal{L}(h)   \quad \forall w \in \mathrm{dom}(h).
\end{align*}
These functions describe the regular extensions in the following way: 
\begin{corollary}\label{Cor:SimpleExt}
Let  $h$  be a Herglotz-Nevanlinna function, $S_h$ the multiplication operator by the independent variable on $\mathcal{L}(h)$ and $A_h$  the self-adjoint extension satisfying $(A_h-w)^{-1}=D_w$.
\par\smallskip
Then  every function  $g \in \mathcal{M}(h)\setminus\{0\}$ defines a regular extension of $S_h$ via
    \begin{align}\label{For1843905}
        (\tilde{A}-w)^{-1}(f) =D_w(f)-\frac{f(w)}{g(w)} \cdot D_w(g), \quad  w \in \varrho(A_h) \vcentcolon g(w)\neq 0.
    \end{align} 
And conversely, every regular extension is of this form. 
\end{corollary}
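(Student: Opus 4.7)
The plan is to specialize the main Theorem \ref{Prop:PseudoRes} to the reproducing kernel setting and to establish an explicit dictionary between the parameter pairs $(v,c)$ and the functions $g \in \mathcal{M}(h) \setminus \{0\}$. First, I would fix the defect element $\phi(\zeta) := N_h(\zeta,-i) = \frac{h(\zeta) - h(i)}{\zeta - i}$; the identification $\eta_w(S_h) = \mathrm{span}\{N_h(\cdot,\bar w)\}$ for $w \in \mathrm{dom}(h)$ follows from a short calculation using the reproducing property together with the fact that $S_h$ acts as multiplication by the independent variable.

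The heart of the argument consists of three explicit identifications. Using $(A_h - w)^{-1} = D_w$ together with the definition of the $\varphi$-field and identity \eqref{Basic:Formula2}, I would verify that $\varphi_\phi(\bar w) = N_h(\cdot, w)$, so that $[f, \varphi_\phi(\bar w)]_{\mathcal{L}(h)} = f(w)$ by the reproducing property. Analogously, for any target $v \in \mathcal{L}(h)$, setting $g_v(\zeta) := (\zeta - i) v(\zeta)$, a direct simplification of $v(\zeta) + (w-i) D_w(v)(\zeta)$ yields $\varphi_v(w) = D_w(g_v)$. Finally, inserting these evaluations into the formula $q_v(\zeta) = c_0 + (\zeta + i)[\varphi_v(\zeta), \varphi_\phi(i)]_{\mathcal{L}(h)}$ from the lemma preceding Theorem \ref{Prop:PseudoRes}, a short computation shows that $q_v - g_v$ is a constant, and since $g_v(i) = 0$ this constant equals $q_v(i)$, giving $q_v(\zeta) = g_v(\zeta) + q_v(i)$.

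Substituting these three facts into the resolvent formula of Theorem \ref{Prop:PseudoRes} collapses it into the form \eqref{For1843905} upon setting $g := q_v + c$. Indeed, $g - g_v \equiv q_v(i) + c$ is constant, so $D_w(g) = D_w(g_v) = \varphi_v(w)$ and $g(w) = q_v(w)+c$, which simultaneously shows $g \in \mathcal{M}(h)$ and matches the numerator and denominator of the fraction. The forward direction is then complete, with $g \not\equiv 0$ corresponding to $q_v + c \not\equiv 0$. For the converse, given $g \in \mathcal{M}(h) \setminus \{0\}$, I would set $v := D_i(g) \in \mathcal{L}(h)$ and $c := g(i) - q_v(i)$; then $g_v(\zeta) = (\zeta - i) D_i(g)(\zeta) = g(\zeta) - g(i)$, so $q_v + c = g_v + q_v(i) + g(i) - q_v(i) = g$, and Theorem \ref{Prop:PseudoRes} supplies a regular extension with the prescribed resolvent.

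The main obstacle is the identification $\varphi_\phi(\bar w) = N_h(\cdot, w)$: only this exact equality (not proportionality up to some function $\alpha(w)$) eliminates any nontrivial scalar factor between the abstract and concrete resolvent formulas, and it depends sensitively on the choice of normalization of $\phi$. Once this clean identification is in hand, the remaining ingredient --- the identity $q_v - g_v \equiv q_v(i)$ --- is routine algebra, and the bijection between $(v,c)$ and $g$ follows by inspection.
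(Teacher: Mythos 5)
Your proposal is correct and takes essentially the same route as the paper: the same choice of defect element $\phi=N_h(\cdot,\overline{i})$, the same identification $\varphi_\phi(\overline{w})=N_h(\cdot,w)$ turning the functional into point evaluation, the same recognition of $\varphi_v(w)$ as a difference quotient, and the same observation that the normalized $Q$-function agrees with $g$ up to an additive constant, with both directions handled by reduction to Theorem \ref{Prop:PseudoRes}. The only cosmetic difference is your explicit auxiliary function $g_v(\zeta)=(\zeta-i)v(\zeta)$ (a correct algebraic identity, since $g_v(i)=0$ pins down the constant), where the paper instead verifies $D_i(q+c)=v$ directly via the adjoint relation $D_\zeta^*=D_{\overline{\zeta}}$ and the reproducing property.
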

In other words, the regular extensions of $S_h$ correspond in a natural way to the analytic functions in $\mathcal{M}(h)$. 
\begin{proof}
    Let us fix the kernel function $\phi=N_h(\cdot,\overline{i})\in \eta_i(S)$ and the self adjoint extension $A_h$ as our elements of reference. First, we calculate the corresponding $\varphi$-field as follows:
    \begin{align*}
        \varphi_\phi(\zeta)&=(I+(\zeta-i)D_\zeta) N_h(\cdot,\overline{i}) =
        (I+(\zeta-i)D_\zeta) D_i(h)
        =D_\zeta(h)=N_h(\cdot,\overline{\zeta}).
    \end{align*}
    Here, we have used the identity $D_\zeta(h)=N_h(\cdot,\overline{\zeta})$ twice. 
    \par\smallskip
    Now let
    $v\in \mathcal{L}(h)$ be an arbitrary target vector, $q$ the associated normalized $Q$-function and $c\in \C$ a constant such that $q+c$ is not identically zero.
    Finally, let $w \in \varrho(A_h)$ such that $q(w)+c\neq 0$. Then the associated regular extension is determined by the following formula:  
    \begin{align*}
        (\tilde{A}-w)^{-1}(f)= (A_h-w)^{-1}(f)-\frac{[f , \varphi_\phi(\overline{w}) ]_\mathcal{H}}{q(w)+c} \cdot \varphi_v(w)= D_w(f)-\frac{f(w)}{q(w)+c} \cdot \varphi_v(w).
    \end{align*}
    We will show that this extension can be expressed as described in  equation \eqref{For1843905}.
    \par\smallskip
    To this end, we first show that $D_i(q+c)=v$ holds. Since $q+c$ is a $Q$-function associated to $v$, the following identity is true: 
    \begin{align*}
    D_i(q+c)&=[\varphi_v(\zeta), \varphi_\phi(\overline{i})]_{\mathcal{L}(h)}
    \end{align*}
    Next, we calculate the right hand side:
    \begin{align*}
    [\varphi_v(\zeta), \varphi_\phi(\overline{i})]_{\mathcal{L}(h)}
    &= [(I+(\zeta-i)D_\zeta) v, N_h(\cdot,i)]_{\mathcal{L}(h)}\\
    &=[ v, (I+(\overline{\zeta}-\overline{i})D_{\overline{\zeta}})N_h(\cdot,i)]_{\mathcal{L}(h)}
    =[ v, N_h(\cdot,\zeta)]_{\mathcal{L}(h)}
    =v(\zeta).
\end{align*}
Here we have used that $D_\zeta^*= D_{\overline{\zeta}}$.
In particular, it follows that  $ q+c \in \mathcal{M}(h)$ by construction. Finally, we calculate the $\varphi$-field induced by $v$ as follows:
\begin{align*}
        \varphi_v(\zeta)&=(I+(\zeta-i)D_\zeta) v =
        (I+(\zeta-i)D_\zeta) D_i(q+c)
        =D_\zeta(q+c).
    \end{align*}
This means that our above expression simplifies to the following identity: 
    \begin{align*}
        (\tilde{A}-w)^{-1}(f)= D_w(f)-\frac{f(w)}{q(w)+c} \cdot D_w(q+c)  \quad  w \in \varrho(A_h) \vcentcolon q(w)+c\neq 0.
    \end{align*}
    Consequently, we have shown that $(\tilde{A}-w)^{-1}$ is of the form described in equation $\eqref{For1843905}$ with $g=q+c$. 
    \par\medskip
    Conversely, let $g\in \mathcal{M}(h)$ and consider the following operator valued function:
    \begin{align*}
        T_{g} \vcentcolon \varrho(A) \setminus \{\zeta \vcentcolon g(\zeta) \neq 0\} \rightarrow \mathscr{L}(\mathcal{H}), \quad T_g(f)=D_w(f)-\frac{f(w)}{g(w)} \cdot D_w(g)
    \end{align*}
    Now let us \emph{define} $v\vcentcolon =D_i(g)\in \mathcal{L}(h)$. In the same spirit as before, we can show that 
    $\varphi_v(\zeta)=D_\zeta(g)$. This  implies that the following holds:
    \begin{align}\label{For:fdosil}
        [\varphi_v(\zeta), \varphi_\phi(\overline{w})]_{\mathcal{L}(h)}=
        [D_\zeta(g), N_h(\cdot,w)]_{\mathcal{L}(h)}=D_\zeta(g)(w).
    \end{align}
    This means that $g$ is a $Q$-function associated to $v$. Now let $q$ be the normalized $Q$-function associated to $v$. Then it follows that $g=q+c$ for some constant $c\in \C$. Now let $\tilde{A}$ be the regular extension associated to $v$ and $c$, which admits the following description of its resolvent: 
    \begin{align*}
        (\tilde{A}-w)^{-1}(f)&= (A_h-w)^{-1}(f)-\frac{[f , \varphi_\phi(\overline{w}) ]_\mathcal{H}}{q(w)+c} \cdot \varphi_v(w)= D_w(f)-\frac{f(w)}{q(w)+c} \cdot \varphi_v(w)\\
        &= D_w(f)-\frac{f(w)}{g(w)} \cdot D_w(g).
    \end{align*}
    In other words, we see that every function $g$ defines a regular extension via formula \eqref{For1843905}.
\end{proof}

It is important to note that for a given regular extension $\tilde{A}$ defined by the data $(v,c)$ the function $g$ coincides with $q+c$.
\color{black}
\subsection{The space $\mathcal{M}(h)$} \label{SectionMh} In this section, we establish an explicit representation of the space $\mathcal{M}(h)$.  For the sake of simplicity, we assume that the function $h$ has the following integral representation: 
    \begin{align*}
    h(\zeta)= a+ \int_{\R} \frac{1+t \zeta}{t-\zeta} \cdot \mathrm{d}\nu
\end{align*}
Note that our assumption is that $b$ in Proposition \ref{Prop:IntRep} equals zero. We start with describing the space $\mathcal{L}(h)$ in the following way:
\begin{proposition} \label{Prop:RealHnfunctions}
   Let us define $\mathrm{d} \mu = (1+t^2) \cdot \mathrm{d} \nu$.  Then the operator
    \begin{align*}
        F \vcentcolon \ L^2(\R,\mu) \rightarrow \mathcal{L}(h), \quad f  \mapsto \left(\zeta \mapsto 
        \int_{\R} \frac{f(t)}{t-\zeta} \cdot d\mu(t)\right)
    \end{align*}
    is an isometric isomorphism \cite[Theorem 5]{Branges1968}.
\end{proposition}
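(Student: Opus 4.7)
The plan is to exploit the fact that reproducing kernels at points of $\mathrm{dom}(h)$ give a dense subspace of $\mathcal{L}(h)$, and to identify their preimages under $F$ as explicit simple functions in $L^2(\R,\mu)$ that also span a dense subspace. Matching inner products on those dense subspaces then gives an isometry, which by continuity extends uniquely to the full spaces.

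First, I would compute the reproducing kernel $N_h$ directly from the integral representation. Since $a\in\R$ and $\nu$ is a (positive) finite Borel measure, one has $\overline{h(w)}=a+\int_\R\frac{1+t\bar w}{t-\bar w}\,d\nu(t)$. Combining the two fractions gives
\begin{equation*}
\frac{1+t\zeta}{t-\zeta}-\frac{1+t\bar w}{t-\bar w}=\frac{(\zeta-\bar w)(1+t^2)}{(t-\zeta)(t-\bar w)},
\end{equation*}
so that $N_h(\zeta,w)=\int_\R\frac{d\mu(t)}{(t-\zeta)(t-\bar w)}$. Defining $k_w(t):=\frac{1}{t-\bar w}$, the assumption $\int_\R\frac{d\mu}{1+t^2}=\nu(\R)<\infty$ shows that $k_w\in L^2(\R,\mu)$ for every $w\in\mathrm{dom}(h)$, and a direct substitution gives $F(k_w)=N_h(\cdot,w)$. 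In particular $[k_w,k_v]_{L^2(\mu)}=\int\frac{d\mu}{(t-\bar w)(t-v)}=N_h(v,w)=[N_h(\cdot,w),N_h(\cdot,v)]_{\mathcal{L}(h)}$ by the reproducing property. By sesquilinearity, $F$ is isometric on the linear span $\mathcal{K}\subset L^2(\R,\mu)$ of $\{k_w:w\in\mathrm{dom}(h)\}$, with image the span of the reproducing kernels, which is dense in $\mathcal{L}(h)$.

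The remaining two ingredients are (i) density of $\mathcal{K}$ in $L^2(\R,\mu)$, and (ii) agreement of the isometric extension with the integral formula. For (ii), once $F$ is extended by continuity from $\mathcal{K}$ to a bounded operator $\tilde F:L^2(\R,\mu)\to\mathcal{L}(h)$, for every $f\in L^2(\R,\mu)$ and $\zeta\in\mathrm{dom}(h)$ the reproducing property gives $\tilde F(f)(\zeta)=[\tilde F(f),N_h(\cdot,\zeta)]_{\mathcal{L}(h)}=[f,k_\zeta]_{L^2(\mu)}=\int\frac{f(t)}{t-\zeta}d\mu(t)$, which is precisely $F(f)(\zeta)$; the Cauchy--Schwarz bound $|F(f)(\zeta)|\le\|f\|_{L^2(\mu)}\|k_{\bar\zeta}\|_{L^2(\mu)}$ also makes the integral defining $F(f)$ pointwise convergent.

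The main obstacle is the density statement (i). I would argue it by duality: suppose $f\in L^2(\R,\mu)$ satisfies $[f,k_w]_{L^2(\mu)}=0$ for every $w\in\C\setminus\R$. This means that the Cauchy transform $w\mapsto\int_\R\frac{f(t)}{t-w}\,d\mu(t)$ of the complex Borel measure $f\,d\mu$ vanishes identically on $\C\setminus\R$. The measure $f\,d\mu$ is admissible for Stieltjes inversion because Cauchy--Schwarz together with $\int\frac{d\mu(t)}{1+t^2}<\infty$ yields $\int\frac{|f(t)|}{1+t^2}\,d\mu(t)<\infty$. The standard Stieltjes inversion/uniqueness theorem for Cauchy transforms then forces $f\,d\mu=0$, i.e.\ $f=0$ in $L^2(\R,\mu)$. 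This closes the argument, and the resulting isometric extension of $F$ is surjective since its range contains the dense subspace spanned by the $N_h(\cdot,w)$.
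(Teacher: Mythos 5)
Your proof is correct. Note, however, that the paper does not actually prove this proposition: it is stated as a quotation of \cite[Theorem 5]{Branges1968}, so there is no internal argument to compare against. What you have written is, in essence, the classical proof of that cited result: (1) the algebraic identity $\frac{1+t\zeta}{t-\zeta}-\frac{1+t\bar w}{t-\bar w}=\frac{(\zeta-\bar w)(1+t^2)}{(t-\zeta)(t-\bar w)}$ identifying $N_h(\zeta,w)=\int_\R\frac{d\mu(t)}{(t-\zeta)(t-\bar w)}$, (2) the observation that $F$ sends $k_w=\frac{1}{\cdot-\bar w}$ to $N_h(\cdot,w)$ and matches inner products of these generators, and (3) density of $\mathrm{span}\{k_w\}$ in $L^2(\R,\mu)$ via uniqueness of the Cauchy/Borel transform. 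All three steps check out; in particular you correctly handle the fact that $\mu$ need only satisfy $\int_\R\frac{d\mu(t)}{1+t^2}<\infty$ rather than be finite, both for membership $k_w\in L^2(\R,\mu)$ and for the admissibility of $f\,d\mu$ in the Stieltjes inversion (your Cauchy--Schwarz bounds are the right ones, and the vanishing of the transform on \emph{both} half-planes is exactly what the uniqueness statement, cf.\ Proposition \ref{Prop:IntRep}, requires). Two cosmetic remarks: the identity $\|k_\zeta\|_{L^2(\mu)}=\|k_{\bar\zeta}\|_{L^2(\mu)}$ makes your final Cauchy--Schwarz bound consistent as written, and for real $w\in\mathrm{dom}(h)$ (which you do not need, but which the kernel formula covers) the same estimates hold because $\mathrm{dist}(w,\sigma(|\nu|))>0$.
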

It should be noted that the elements in $\mathcal{L}(h)$ are quasi-Herglotz functions. Indeed, if $f$ is a positive function in $L^2(\nu)$, then $F(f)$ is a Herglotz-Nevanlinna function \cite[Theorem 3.9]{Teschl2009MathematicalMechanics}. Consequently, we can split up an arbitrary function $f\in L^2(\nu)$ into its positive and negative real imaginary parts, and end up with a sum of the form described in equation \eqref{QuasiHn}. 
\par\smallskip
Now let us consider the  Herglotz-Nevanlinna function $h$ given by $h_{|\C^+}=i$ and $h_{|\C^-}=-i$. It is well-known that its representing measure is just the Lebesque measure and that the associated space $\mathcal{L}(h)$ is of the following form:
\begin{align*}
        \mathcal{L}(h)=\mathcal{H}^2(\C^+) \oplus \mathcal{H}^2(\C^-).
    \end{align*}
    Here, $\mathcal{H}^2(\C^+)$ denotes the Hardy space, which we  embed into $\mathcal{L}(h)$ as follows: 
    \begin{align*}
        j \vcentcolon \mathcal{H}^2(\C^+)\hookrightarrow \mathcal{L}(h), \quad i(f)(\zeta) \vcentcolon = 
        \begin{cases}
            f(\zeta), \ &\text{if} \ \zeta \in \C^+ \\
        0, \ &\text{if} \ \zeta \in \C^-
        \end{cases}.
    \end{align*}
We see that there are sometimes non-trivial functions in $\mathcal{L}(h)\subset \mathcal{M}(h)$ that vanish identically on either $\C^+$ or $\C^-$. Of course, a necessary conditions for the existence of such functions is that $\sigma(A_h)=\R$ holds. More precisely, functions of this type can only exist if and only if the $L^2(\R,\mu)\neq H^2(\R,\mu)$, which was discussed in detail in \cite{Aleman}. It will turn out that these functions define regular extensions with interesting spectral properties. 
\par\smallskip
Finally, the space $\mathcal{M}(h)$ has then the following representation: 
\begin{proposition}\label{Propfunctionen}
Let us define $\mathrm{d} \mu = (1+t^2) \cdot \mathrm{d} \nu$.  Then
the space $\mathcal{M}(h)$ consists of the quasi-Herglotz functions 
\begin{equation*}
g(\zeta)=a +\int_\R  \frac{1+t \zeta }{t-\zeta} \cdot \left(\frac{f(t)}{t-i} \cdot \mathrm{d}\mu(t)\right)\quad \forall \zeta\in\mathbb \C\setminus \R. 
\end{equation*}
where $f \in L^2(\R,\mu)$ and $a\in \C$ is arbitrary. Note that $\frac{1}{t-i} \in L^2(\R,\mu)$ which implies that 
\begin{align*}
    \frac{f(t)}{t-i} \cdot \mathrm{d}\mu(t)
\end{align*}
is a (finite) complex measure by Hoelder`s inequality. 
\end{proposition}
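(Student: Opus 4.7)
The plan is to leverage Proposition \ref{Prop:RealHnfunctions}, which gives the concrete parametrization $\mathcal{L}(h) = F(L^2(\R,\mu))$. Combining this with the second characterization of $\mathcal{M}(h)$ from its definition, a function $g$ lies in $\mathcal{M}(h)$ precisely when $D_i(g) = F(\tilde f)$ for some $\tilde f \in L^2(\R,\mu)$, which is equivalent to
\begin{align*}
g(\zeta) = g(i) + (\zeta - i) \int_\R \frac{\tilde f(t)}{t - \zeta}\, \mathrm{d}\mu(t).
\end{align*}
The task is therefore to show that this parametrization is equivalent to the one asserted in the Proposition.

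The key algebraic identity bridging the two representations is
\begin{align*}
\frac{1 + t\zeta}{t - \zeta} - i = \frac{(\zeta - i)(t + i)}{t - \zeta},
\end{align*}
verified by direct expansion, together with the specialization $\frac{1 + ti}{t - i} = i$. Since $|t + i| = |t - i|$ on $\R$, multiplication by $\frac{t + i}{t - i}$ is a unitary of $L^2(\R,\mu)$, providing the bijection $f \leftrightarrow \tilde f$ with $\tilde f(t) = \frac{t + i}{t - i}\, f(t)$ that converts between the two integral forms.

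For the inclusion showing that every $g$ of the proposed form lies in $\mathcal{M}(h)$, I would compute $g(i) = a + i \int_\R f(t)/(t-i)\, \mathrm{d}\mu(t)$ using the specialization above, then apply the key identity to rewrite $D_i(g)(\zeta) = \int_\R \tilde f(t)/(t-\zeta)\, \mathrm{d}\mu(t) = F(\tilde f)(\zeta)$, whence $D_i(g) \in \mathcal{L}(h)$. For the reverse direction, I would start from $g \in \mathcal{M}(h)$, write $D_i(g) = F(\tilde f)$, define $f(t) = \frac{t-i}{t+i}\tilde f(t) \in L^2(\R,\mu)$, and run the identity backwards to obtain the desired form with $a = g(i) - i \int_\R f(t)/(t-i)\, \mathrm{d}\mu(t)$.

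Finally, the concluding assertion that $\frac{f(t)}{t-i}\, \mathrm{d}\mu(t)$ is a finite complex measure follows from H\"older's inequality once we observe $\|1/(t-i)\|_{L^2(\mu)}^2 = \int_\R \frac{1}{1+t^2}\, \mathrm{d}\mu = \nu(\R) < \infty$. The main obstacle is locating the algebraic identity above and the corresponding unitary on $L^2(\R,\mu)$; once these are in hand, the argument reduces to transporting parameters through the isometry $F$ and checking finiteness via Cauchy--Schwarz.
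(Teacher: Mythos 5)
Your proposal is correct and follows essentially the same route as the paper: both reduce the claim, via Proposition \ref{Prop:RealHnfunctions}, to the statement that $D_{w_0}(g)$ is a Cauchy transform $F(\tilde f)$ of an $L^2(\R,\mu)$ density and then convert between the Cauchy-type and Herglotz-type integral kernels by an elementary partial-fraction identity. The only (cosmetic) difference is your choice of base point $i$ rather than $\overline{i}$, which forces the extra unimodular multiplier $\tfrac{t+i}{t-i}$ relating $f$ and $\tilde f$; the paper's choice of $\overline{i}$ lets the same density appear in both representations.
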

\begin{proof}
By construction, a function $g$ is in $\mathcal{M}(h)$ if and only if $D_{\overline{i}}(g)$ is an element of $\mathcal{L}(h)$. This is the case if and only if there is a function $f\in  L^2(\R,\mu)$ such that 
\begin{align*}
D_{\overline{i}}(g)(\zeta)=\int_\R  \frac{1 }{t-\zeta} \cdot f(t) \cdot \mathrm{d}\mu(t)\quad \forall \zeta\in\mathbb \C\setminus \R,
\end{align*}
see Proposition \ref{Prop:RealHnfunctions}.
We  can rewrite this as follows:
\begin{align*}
g(\zeta)&=
g(\overline{i}) + (\zeta+i) \cdot \int_\R  \frac{1 }{t-\zeta} \cdot f(t) \cdot \mathrm{d}\mu(t)\\
&=g(\overline{i}) +  \int_\R  \left(\frac{\zeta+i }{t-\zeta} + \frac{i }{t-i}-\frac{i }{t-i}\right) \cdot f(t) \cdot \mathrm{d}\mu(t)\\
&=  \left(g(\overline{i}) +   \int_\R  \frac{i }{t-i} \cdot f(t) \cdot \mathrm{d}\mu(t)\right)+ \int_\R \left( \frac{-i }{t-i} + \frac{\zeta+i }{t-\zeta}\right) \cdot f(t) \cdot \mathrm{d}\mu(t)\\
&=a + \int_\R  \left(\frac{1+t \zeta }{t-\zeta} \cdot \frac{1}{t-i}\right) \cdot f(t) \cdot  \mathrm{d}\mu(t),
\end{align*}
where $a=g(\overline{i}) +   \int_\R  \frac{i }{t-i} \cdot f(t) \cdot \mathrm{d}\mu(t) $ is a constant.   Note that $g(\overline{i})$ can be chosen arbitrarily, because with $g$ also $g+c, \ c \in \C$ is an element of $\mathcal{M}(h)$.
\end{proof}

\subsection{A Functional model}
Under some mild additional assumption, we can model every extension as the difference-quotient operator on a suitable reproducing kernel space. To this end, we  recall the concept of a conjugation:
\begin{proposition}[Conjugation]\label{Prop:Conj}
Let $\mathcal{H}$ be a reproducing kernel Hilbert space with domain $U$ and kernel $k$. Moreover, let
$g\in \mathcal{O}(U)$ be an analytic function which is not identically zero on any component of $U$. 
Consider the vector space $g \cdot \mathcal{H} \subset \mathcal{O}(U)$ and the operator 
\begin{align*}
    M_g \vcentcolon \mathcal{H} \rightarrow g \cdot \mathcal{H}, \quad  f \mapsto g \cdot f,
\end{align*}
which is bijective by our assumption on $g$. We equip  $g \cdot \mathcal{H}$ with the inner product that turns  $M_g$ into an isomorphism of Hilbert spaces. Then $g \cdot \mathcal{H}$
is a reproducing kernel Hilbert space with kernel 
\begin{align*}
    g(\zeta) \cdot k(\zeta,w) \cdot \overline{g(w)}.
\end{align*}
\end{proposition}
A proof can be found in \cite{RepKernelSpaces}.
\begin{proposition}
Let $g\in \mathcal{M}(h)$ be such that $g$ is not identically zero on  neither $\C^+$ nor  $\C^-$. Moreover, consider the associated extension
\begin{align}\label{For:Leibniz}
    (\tilde{A}-w)^{-1}(f) =D_w(f)-\frac{f(w)}{g(w)} \cdot D_w(g).
\end{align}
Then the following diagram commutes:  
\[\begin{tikzcd}
\mathcal{L}(h) \arrow{r}{(\tilde{A}-w)^{-1}} \arrow[swap]{d}{\frac{1}{g}} & \mathcal{L}(h)  \\
\frac{1}{g} \mathcal{L}(h) \arrow{r}{D_w} & \frac{1}{g} \mathcal{L}(h) \arrow{u}{g}
\end{tikzcd}.
\]
In other words, if $ \mathcal{F} \vcentcolon \mathcal{L}(h) \rightarrow \frac{1}{g} \cdot \mathcal{L}(h)$ denotes the isomorphism induced by the conjugation, then it holds that
\begin{align*}
    \mathcal{F} (\tilde{A}-w)^{-1} \mathcal{F}^{-1} =D_w.
\end{align*}
\end{proposition}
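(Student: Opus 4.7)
The plan is to reduce the entire diagram to a single algebraic identity, namely the Leibniz rule for the difference quotient operator. For any two analytic functions $u,v$ (defined near the relevant points) and for $\zeta\neq w$ one has
\begin{align*}
D_w(uv)(\zeta)=u(\zeta)\,D_w(v)(\zeta)+v(w)\,D_w(u)(\zeta),
\end{align*}
as seen by writing $u(\zeta)v(\zeta)-u(w)v(w)=u(\zeta)\bigl(v(\zeta)-v(w)\bigr)+v(w)\bigl(u(\zeta)-u(w)\bigr)$ and dividing by $\zeta-w$. This is the only nontrivial ingredient of the proof.

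Next, for $f\in\mathcal{L}(h)$ and $w\in\varrho(A_h)$ with $g(w)\neq 0$, I would apply this identity to $u=g$ and $v=f/g$. The result is
\begin{align*}
D_w(f)=g\cdot D_w(f/g)+\frac{f(w)}{g(w)}\,D_w(g),
\end{align*}
which rearranges immediately to
\begin{align*}
g\cdot D_w(f/g)=D_w(f)-\frac{f(w)}{g(w)}\,D_w(g)=(\tilde{A}-w)^{-1}(f),
\end{align*}
by the defining formula \eqref{For:Leibniz}. Reading this as $M_g\circ D_w\circ M_{1/g}=(\tilde{A}-w)^{-1}$, or equivalently $\mathcal{F}\,(\tilde{A}-w)^{-1}\,\mathcal{F}^{-1}=D_w$, is exactly the commutativity of the diagram.

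It remains to check that the diagram actually makes sense. Since $g$ does not vanish identically on either component of $\mathrm{dom}(h)=\C^+\cup\C^-$, Proposition \ref{Prop:Conj} applies: the map $M_g$ is a bijection from $\mathcal{L}(h)$ onto $g\cdot\mathcal{L}(h)$, and hence $\mathcal{F}=M_{1/g}$ provides a unitary identification between $\mathcal{L}(h)$ and $\tfrac{1}{g}\mathcal{L}(h)$ when the latter is equipped with the conjugation inner product. The identity above also shows that $D_w$ maps $\tfrac{1}{g}\mathcal{L}(h)$ into itself whenever $g(w)\neq 0$: any $F\in\tfrac{1}{g}\mathcal{L}(h)$ has the form $F=f/g$ with $f\in\mathcal{L}(h)$, and $g\cdot D_w(F)=(\tilde{A}-w)^{-1}(f)\in\mathcal{L}(h)$, so $D_w(F)\in\tfrac{1}{g}\mathcal{L}(h)$.

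There is essentially no obstacle here: once the Leibniz-type identity for $D_w$ is written down, the whole statement is a one-line rearrangement, and the only substantive point is the bookkeeping that $M_{1/g}$ is a genuine unitary isomorphism, which is handled by the non-vanishing hypothesis on $g$ via Proposition \ref{Prop:Conj}.
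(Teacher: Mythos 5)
Your proof is correct and follows essentially the same route as the paper's: both reduce the diagram to the Leibniz rule $D_w(uv)=u\cdot D_w(v)+v(w)\cdot D_w(u)$ and then rearrange, the only cosmetic difference being that you verify $M_g\circ D_w\circ M_{1/g}=(\tilde{A}-w)^{-1}$ on $\mathcal{L}(h)$ while the paper verifies the conjugate identity $\mathcal{F}(\tilde{A}-w)^{-1}\mathcal{F}^{-1}=D_w$ on $\tfrac{1}{g}\mathcal{L}(h)$. Your additional remark that $D_w$ preserves $\tfrac{1}{g}\mathcal{L}(h)$ is a harmless extra piece of bookkeeping that the paper leaves implicit.
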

\begin{proof}
Let $f$ and $g$ be analytic functions. 
First, we recall the Leibniz-rule for difference quotients: 
\begin{align*}
    D_w(g \cdot f)(\zeta) = g(\zeta)\cdot D_w(f)+ f(w) \cdot D_w(g).
\end{align*}
Now let $f \in \frac{1}{g} \mathcal{L}(h)$. Then we calculate 
\begin{align*}
    \mathcal{F} (\tilde{A}-w)^{-1} \mathcal{F}^{-1}(f)&=
    \frac{1}{g} \cdot (\tilde{A}-w)^{-1}(g \cdot f)
    = \frac{1}{g} \left( D_w(g \cdot f)-\frac{g(w) \cdot f(w)}{g(w)} \cdot D_w(g) \right)\\
    &=\frac{1}{g} \left(g \cdot D_w(f)+ f(w) \cdot D_w(g)-f(w) \cdot D_w(g) \right)=D_w(f).
\end{align*}
This completes the proof. 
\end{proof} 
\subsection{Example: Inner functions and Model spaces}
In this section, we explore a connection to another class of well studied reproducing kernel Hilbert spaces. We first give the following definition: 
\begin{definition}
    Let $v$ be a bounded analytic function on $\C^+$ such that $\|v\|_\infty\leq 1$. Then $v$ is an inner function, if it has unimodular boundary values almost everywhere. 
\end{definition}
In order to stay in the framework of this article, we extend $v$ to the $\C^-$ via the following rule: 
\begin{align*}
    v(\zeta)=\frac{1}{\overline{v(\overline{\zeta})}}, \quad \forall \zeta \in \C^-.
\end{align*}
This extension is usually called pseudocontinuation  \cite{Modelspaces}. Next, consider the Cayley transform 
\begin{align*}
h=i \cdot \frac{1- v}{1+v}.
\end{align*}
Then it is straightforward to verify that $h$ is a Herglotz-Nevanlinna function with real boundary values (a so called singular Herglotz-Nevanlinna function).  
\par\smallskip
Now let us consider the following function:
\begin{align*}
   g \vcentcolon = \frac{1}{2i}\cdot (h+i)=\frac{1}{2i}\cdot \left(\frac{1-v}{1+v}\cdot i +i \right) = \frac{1}{1+v}.
\end{align*}
Note that $g \in \mathcal{M}(h)$, because $D_i(g)=\frac{1}{2i} \cdot N_h(\cdot,\overline{i})$.
 The associated regular extension $\tilde{A}$  is then given by the following diagram:
\[\begin{tikzcd}
\mathcal{L}(h) \arrow{r}{(\tilde{A}-w)^{-1}} \arrow[swap]{d}{\frac{1}{g}} & \mathcal{L}(h)  \\
\frac{1}{g} \mathcal{L}(h) \arrow{r}{D_w} & \frac{1}{g} \mathcal{L}(h) \arrow{u}{g}
\end{tikzcd}.
\]
A short computation, whose details can be found in \cite{Me3}, shows that the kernel transforms in the following way: 
\begin{align*}
\frac{1+v(\zeta)}{\sqrt{2}}\cdot & N_h(\zeta,w) \cdot  \frac{1+\overline{v(w)}}{\sqrt{2}}= \frac{1-v(\zeta)\cdot \overline{v(w)}}{-i \cdot (\zeta-\overline w)}.
\end{align*}
Therefore, we see that $\frac{1}{g} \mathcal{L}(h)$ is the model space associated to $v$ (see \cite{Modelspaces} for a comprehensive introduction to their theory). Moreover, the   function $g=\frac{1}{v+1}$ is the defining function  of the extension $\tilde{A}$. We will see in the next section that the function theoretic properties of $g$ (and thus of $v$) are closely related to the spectral properties of $\tilde{A}$.

\section{The spectrum of the extensions}\label{Sec:hfsdljk}
Now let  $S$ be a symmetric relation with deficiency index $(1,1)$,  and let our reference framework be given by $A$ as the self-adjoint extension and $\phi \in \eta_i(S)$ as the defect element. 
\par\smallskip
In this section, we investigate the spectral properties of the regular extension $\tilde{A}$ given by the following resolvent formula:
    \begin{align}\label{ForHelp}
    (\tilde{A}-z)^{-1}= (A-z)^{-1}-\frac{[\ \cdot \ , \varphi_\phi(\overline{z}) ]_\mathcal{H}}{q(z)+c} \cdot \varphi_v(z) \quad  \forall z \in \varrho(A) \vcentcolon q(z)+c \neq 0.
    \end{align}
    Here, $v\in \mathcal{H}$ is a target vector, $q$ the associated normalized $Q$-function and $c \in \C$ is a constant such that $q+c$ is not identically zero. 
\begin{theorem}\label{Theorm:Sepctrum}
Let $w\in \varrho(A)$ be fixed. If $q(w)+c=0$ holds, then $w$ is a geometrically  simple eigenvalue of $\tilde{A}$. Moreover, if $w$ is a zero of order $n<\infty$, then  $w$ has algebraic multiplicity $n$.  
\end{theorem}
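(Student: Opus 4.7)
My plan is to prove the theorem in three stages: exhibit an explicit eigenvector at $w$, show that the geometric eigenspace is one-dimensional, and then compute the order of the pole of $(\tilde A - z)^{-1}$ at $w$, so that finite-dimensional Jordan theory yields the algebraic multiplicity. For the first stage I would apply the $Q$-function identity \eqref{For:Q-function} with $\zeta = w$ and $w' = \overline{z}$, obtaining $[\varphi_v(w),\varphi_\phi(\overline{z})] = (q(w)-q(z))/(w-z) = (q(z)+c)/(z-w)$ once $q(w)+c=0$ is used. Substituting into \eqref{ForHelp} the factor $q(z)+c$ cancels, yielding $(\tilde A - z)^{-1}\varphi_v(w) = \varphi_v(w)/(w-z)$, which is exactly the resolvent behaviour on an eigenvector; hence $\varphi_v(w) \in \ker(\tilde A - w)$, and $\varphi_v(w) \neq 0$ since $A v = i v$ is forbidden by the self-adjointness of $A$.

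For geometric simplicity I would take any $f \in \ker(\tilde A - w)$, note that $(\tilde A - z)^{-1} f = f/(w-z)$, and rearrange \eqref{ForHelp} to
$$\frac{[f,\varphi_\phi(\overline{z})]}{q(z)+c}\,\varphi_v(z) = (A-z)^{-1}f + \frac{f}{z-w}.$$
The right-hand side has at most a simple pole at $w$ with residue $f$. Writing $q(z)+c = (z-w)^n g(z)$ with $g(w)\neq 0$ and Taylor-expanding the numerator in $\zeta = z-w$, the absence of higher-order poles together with $\varphi_v(w)\neq 0$ forces $[f,\varphi_\phi^{(k)}(\overline{w})]=0$ for $k=0,\ldots,n-2$; the $\zeta^{-1}$ coefficient then pins down $f$ as a scalar multiple of $\varphi_v(w)$, so $\ker(\tilde A - w) = \mathbb{C}\cdot \varphi_v(w)$.

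For the algebraic multiplicity I first check that the pole of $(\tilde A - z)^{-1}$ at $w$ has order exactly $n$: multiplying \eqref{ForHelp} by $(z-w)^n$ removes the denominator and yields $(z-w)^n(A-z)^{-1}-\frac{1}{g(z)}[\,\cdot\,,\varphi_\phi(\overline{z})]\,\varphi_v(z)$, which extends analytically at $w$ to the nonzero rank-one operator $-g(w)^{-1}[\,\cdot\,,\varphi_\phi(\overline{w})]\,\varphi_v(w)$; on the other hand $(z-w)^{n-1}(\tilde A - z)^{-1}$ is unbounded, since on any $f$ with $[f,\varphi_\phi(\overline{w})]\neq 0$ the rank-one term still blows up. The Riesz projection $P = -\frac{1}{2\pi i}\oint(\tilde A - z)^{-1}\,dz$ is therefore a finite-rank projection (its Laurent coefficients inherit the rank-one structure of the perturbation), $\mathrm{ran}(P)$ is $\tilde A$-invariant, and the restriction of $\tilde A$ to $\mathrm{ran}(P)$ is a finite-dimensional linear map; geometric simplicity then forces a single Jordan block whose size equals the pole order $n$, giving algebraic multiplicity $n$. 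The main obstacle is the passage from pole order to algebraic multiplicity in the framework of closed linear relations, but once $\mathrm{ran}(P)$ is seen to be finite-dimensional and $\tilde A$-invariant the argument reduces to the classical finite-dimensional Jordan-form theorem.
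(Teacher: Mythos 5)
Your eigenvector computation and your treatment of the algebraic multiplicity are essentially sound and, in substance, match the paper's proof: the paper likewise verifies via the $Q$-function identity that $\varphi_v(w)$ is annihilated by $\tilde{A}-w$, and for the multiplicity it invokes the standard fact that the order of the pole of the resolvent equals the ascent at $w$, which together with geometric simplicity yields algebraic multiplicity $n$; your Riesz-projection reconstruction of that fact is a legitimate, if more laborious, substitute, and your observation that the pole has order exactly $n$ because the rank-one term survives after multiplication by $(z-w)^n$ is correct.

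However, your argument for geometric simplicity has a genuine gap. You expand
\begin{align*}
\frac{[f,\varphi_\phi(\overline{z})]_\mathcal{H}}{q(z)+c}\cdot\varphi_v(z)=(A-z)^{-1}f+\frac{f}{z-w}
\end{align*}
in a Laurent series around $z=w$ and match singular parts; this requires $q(z)+c\neq 0$ for all $z$ in a punctured neighbourhood of $w$, i.e.\ it requires the zero of $q+c$ at $w$ to have finite order. But the first assertion of the theorem is not restricted to finite-order zeros: as the paper stresses immediately after the statement, $q+c$ may vanish identically on the half-plane containing $w$ (only vanishing on all of $\varrho(A)$ is excluded), in which case no $z$ near $w$ lies in $\varrho(\tilde{A})$ and your expansion is unavailable. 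The paper's proof avoids this by working at a single fixed $z\in\varrho(\tilde{A})$, possibly far from $w$: it rewrites the eigenvalue equation $(\tilde{A}-w)f=0$ as
\begin{align*}
\left(I+(z-w)(A-z)^{-1}\right)\left(f-\frac{(z-w)\cdot[f,\varphi_\phi(\overline{z})]_\mathcal{H}}{q(z)+c}\cdot\varphi_v(w)\right)=0
\end{align*}
and uses that $I+(z-w)(A-z)^{-1}$ is injective (a nonzero kernel element would give an eigenvector of the self-adjoint $A$ at $w\in\varrho(A)$), which forces $f$ to be a scalar multiple of $\varphi_v(w)$ with no limiting procedure. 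You would need an argument of this single-point algebraic type to cover the infinite-order case; for finite $n$ your coefficient-matching argument does go through.
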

Here, it is important to remember that while the function $q+c$ is not allowed to vanish identically on $\varrho(A)$, it may vanish identically on either $\C^+$ or $\C^-$. We have discussed this possibility in detail in Section \ref{SectionMh}.  Of course, this can only happen when $\varrho(A)= \C \setminus \R$. 
\color{black}
\begin{proof}
    Let $z \in \varrho(A)$ such that $q(z)+c\neq 0$, which means that $z \in \varrho(\tilde{A})$. 
    First, we establish the following useful algebraic identity:
    \begin{align*}
        (\tilde{A}-w)f=0 &\Leftrightarrow (\tilde{A}-z) f +(z-w) f=0 \\
        &\Leftrightarrow f+ (z-w) (\tilde{A}-z)^{-1} f = 0 \\
    &\Leftrightarrow  f+ (z-w) (A-z)^{-1}(f)-\frac{(z-w) \cdot [f , \varphi_\phi(\overline{z}) ]_\mathcal{H}}{q(z)+c} \cdot \varphi_v(z)  = 0\\
     &\Leftrightarrow
     \left(I+(z-w)(A-z)^{-1} \right) 
    \left( f- \frac{(z-w) \cdot [f , \varphi_\phi(\overline{z}) ]_\mathcal{H}}{q(z)+c} \cdot \varphi_v(w)
     \right)=0.
    \end{align*}
    \color{black}
    Here, we have used the following identity in the last step:  
    \begin{align*}
        \left(I+(z-w)(A-z)^{-1} \right)\varphi_v(w)=\varphi_v(z).
    \end{align*}
    Consequently, in order for $(\tilde{A}-w)f=0$ to be true, $f$ has to be a multiple of $\varphi_v(w)$. This  implies that the geometric multiplicity of $w$ is at most one. 
\par\smallskip
Now set $f=\varphi_v(w)$.  We verify  using $q(w)=-c$ that $f$ is indeed an eigenvector:
\begin{align*}
        \varphi_v(w)-\frac{(z-w)[\varphi_v(w) , \varphi_\phi(w) ]_\mathcal{H}}{ q(z)+c} \cdot  \varphi_v(w)
        &=\varphi_v(w)-\frac{q(z)-q(w)}{ q(z)+c} \cdot  \varphi_v(w)\\
        &=\varphi_v(w)-\frac{q(z)+c}{ q(z)+c} \cdot  \varphi_v(w)
        =0.
\end{align*}
This means that $\lambda$ is a geometrically simple eigenvalue.
\par\smallskip
Now let us assume that $\lambda$ is a zero of order $n<\infty$. Then the resolvent $(\tilde{A}-\zeta)^{-1}$ has a pole of order $n$ at $\lambda$. The order of the pole coincides with the ascent at $\lambda$ \cite[Theorem 10.1]{FuncAnaOld}, i.e. the
 the smallest number $m$ such that   
\begin{align*}
    \mathrm{ker}((\tilde{A}-\lambda)^{m}) = \mathrm{ker}((\tilde{A}-\lambda)^{m+1}).
\end{align*}
Since $\lambda$ is geometrically simple this ascent coincides with the algebraic multiplicity of $\lambda$. This completes the proof.
\end{proof}
The distribution of zeros of $q+c$ is well behaved. More precisely, the function $q_{|\C^+}+c$ is of bounded type, since Herglotz-Nevanlinna functions restricted to $\C^+$ are of bounded type \cite[Chapter 5]{Rosenblum1994TopicsFunctions}. This means that  $q_{|\C^+}+c$ is either identically zero or has countably many zeros which  satisfy the Blaschke condition 
\begin{align*}
            \sum_{k=1}^\infty \bigg|\mathrm{Im}\left(\frac{1}{z_k}\right)\bigg|<\infty. 
\end{align*}
Here, $(z_k)_{k \in \N}\subset \C^+$ denotes the zeros counted with their multiplicities. This allows us to characterize the spectrum in the upper-half plane or lower-half plane: 
\begin{corollary}
    One of the following statements is true: 
    \begin{itemize}
        \item Every point $w\in\C^+$ is an eigenvalue of $\tilde{A}$.
        \item There exists a countable set of eigenvalues $(z_k)_{k \in \N}\subset \C^+$ (counted with regards to their algebraic multiplicity),  which satisfies the Blaschke condition 
        \begin{align*}
            \sum_{k=1}^\infty \bigg|\mathrm{Im}\left(\frac{1}{z_k}\right)\bigg|<\infty. 
        \end{align*}
    \end{itemize}
     Conversely, any such distribution of the spectrum occurs. 
    An analogous statement is  true for $\C^-$. 
\end{corollary}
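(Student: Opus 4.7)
The plan is to identify the eigenvalues of $\tilde{A}$ in $\C^+$ with the zeros of $q+c$ via Theorem \ref{Theorm:Sepctrum}, and then apply classical bounded-type function theory to derive the dichotomy. Since $\C^+ \subset \varrho(A)$, for every $w \in \C^+$ the condition $(q+c)(w)=0$ is equivalent to $w$ being an eigenvalue of $\tilde{A}$, with algebraic multiplicity equal to the order of the zero. Hence the spectrum of $\tilde{A}$ in $\C^+$, counted with algebraic multiplicity, coincides with the zero set of $(q+c)_{|\C^+}$ counted with order.

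For the forward direction, I decompose $q = (h_1-h_2) + i(h_3-h_4)$ as a $\C$-linear combination of Herglotz-Nevanlinna functions, as in Definition \ref{Prop:Herglotz}. Each $h_j$ restricted to $\C^+$ is of bounded type by the classical integral representation (see \cite[Chapter 5]{Rosenblum1994TopicsFunctions}). Since the class of bounded-type functions on $\C^+$ is closed under $\C$-linear combinations and constant translations, $(q+c)_{|\C^+}$ is itself of bounded type. Applying Nevanlinna's factorization theorem to $(q+c)_{|\C^+}$ then yields the dichotomy: either $(q+c)_{|\C^+} \equiv 0$ and every $w \in \C^+$ is an eigenvalue, or the zero set is at most countable and satisfies the stated Blaschke condition.

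For the converse, the plan is to use the function-theoretic parametrization from Corollary \ref{Cor:SimpleExt}, in which regular extensions of a simple symmetric operator correspond to nonzero $g \in \mathcal{M}(h)$, with eigenvalues in $\C^+$ being exactly the zeros of $g$ there. For the first alternative, pick $h$ with $h_{|\C^+}=i$ and $h_{|\C^-}=-i$, so that $\mathcal{L}(h)= \mathcal{H}^2(\C^+) \oplus \mathcal{H}^2(\C^-)$ as discussed in Section \ref{SectionMh}; any nonzero $g$ lying in the $\mathcal{H}^2(\C^-)$-summand (via the embedding $j$) belongs to $\mathcal{L}(h) \subset \mathcal{M}(h)$ and vanishes identically on $\C^+$, hence defines a regular extension whose spectrum in $\C^+$ is all of $\C^+$. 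For the second alternative, given a Blaschke sequence $(z_k)_{k\in\N} \subset \C^+$ with multiplicities $(n_k)$, one forms the corresponding Blaschke product $B$ on $\C^+$, pseudocontinues it to an inner function on $\C \setminus \R$, and builds a $g \in \mathcal{M}(h)$ from $B$ via the Cayley-transform construction of the Model spaces example so that $g$ has exactly the prescribed zero distribution in $\C^+$.

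The main obstacle is the Blaschke case of the converse: one must verify that the constructed $g$ genuinely lies in $\mathcal{M}(h)$ for a suitable Herglotz $h$, has no spurious zeros in $\C^+$, and reproduces the prescribed multiplicities exactly. The inner-function/Cayley-transform framework of Section 4.4 makes this feasible, but tracking zero orders through the transform requires careful bookkeeping. The analogous statement for $\C^-$ then follows by applying the same arguments to the complex conjugate $\overline{q(\overline{\zeta})} + \overline{c}$, which swaps the roles of the two half-planes.
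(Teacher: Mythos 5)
Your forward direction and the ``all of $\C^+$'' case of the converse match the paper: the paper likewise identifies $\sigma(\tilde A)\cap\C^+$ with the zero set of $(q+c)_{|\C^+}$ counted with order via Theorem \ref{Theorm:Sepctrum}, invokes bounded type of $q_{|\C^+}+c$ (through the quasi-Herglotz decomposition) for the dichotomy, and realizes the first alternative with $h_{|\C^+}=i$, $h_{|\C^-}=-i$ and a $g$ supported on $\C^-$.

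The gap is in the Blaschke case of your converse. The Cayley-transform construction of the Model spaces example produces $g=\tfrac{1}{1+v}$, and for a nonconstant inner function $v$ one has $|v|<1$ strictly on $\C^+$, so $1+v$ never vanishes there and $g$ is \emph{zero-free} on $\C^+$. As literally described, your construction therefore cannot realize any nontrivial prescribed zero distribution; the ``careful bookkeeping'' you defer is exactly the point at which the argument breaks. It is repairable in two ways. The paper's route stays with the single Herglotz function $h=\pm i$, for which $\mathcal{L}(h)=\mathcal{H}^2(\C^+)\oplus\mathcal{H}^2(\C^-)\subset\mathcal{M}(h)$, and simply glues $g_+=B\cdot(\cdot+i)^{-1}\in\mathcal{H}^2(\C^+)$ (Blaschke product times a zero-free $H^2$ factor, so the zeros and multiplicities are exactly those prescribed) to an arbitrary nonzero $g_-\in\mathcal{H}^2(\C^-)$; no inner-function machinery or pseudocontinuation is needed, since the two half-planes decouple completely for this $h$. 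Alternatively, if you insist on the Cayley framework, replace $\tfrac{1}{2i}(h+i)=\tfrac{1}{1+v}$ by $\tfrac{1}{2i}(h-i)=\tfrac{-v}{1+v}$, which differs from it by a constant (hence still lies in $\mathcal{M}(h)$) and whose zeros on $\C^+$ are precisely those of $v=B$ with the correct multiplicities. Either fix closes the gap; without one of them the second alternative of the converse is not established.
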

\begin{proof}
    It is clear that the spectrum of an extension is of this form from the preceeding discussion. Conversely, let us show that any such distribution occurs. To this end, consider the Herglotz-Nevanlinna function $h$ defined as follows: 
    \begin{align*}
        h(\zeta)=\
        \begin{cases}
            i, \quad & \text{ if }\zeta \in \C^+ \\
            -i, \quad & \text{ if } \zeta \in \C^-.
        \end{cases}
    \end{align*}
    Now consider any distribution of eigenvalues on $\C^+$ as described above. Then we can find some function $g_+ \in \mathcal{H}^2(\C^+)$  whose zeros match this distribution \cite[Chapter 5]{Rosenblum1994TopicsFunctions}. Finally, let $g_- \in \mathcal{H}^2(\C^-) \setminus \{0\}$ be \emph{non- trivial}. 
    Then the function $g$ defined via $g_{|\C^+}=g_+$ and $g_{|\C^-}=g_-$ is an element of $ \mathcal{M}(h)  \setminus \{0\}$ (see Section \ref{SectionMh}). This function $g$ defines the following regular extension of the simple symmetric operator $S_h$ (see Chapter \ref{Ext:SimpleSym})
\begin{align*}
        (\tilde{A}-w)^{-1}(f) =D_w(f)-\frac{f(w)}{g(w)} \cdot D_w(g).
    \end{align*} 
Since the zeros of $g$ on $\C^+$ coincide with the prescribed eigenvalue distribution, we conclude that $\tilde{A}$ meets our demand. 
\color{black}
\end{proof}
So far we have only investigated points in $\varrho(A)$. In the final part of this section, we turn our attention towards points in the spectrum of $A$. To this end, we assume from now on that $S$ is a simple symmetric operator with deficiency index $(1,1)$. In the spirit of Chapter \ref{Ext:SimpleSym}, we can regard $S$ as the multiplication operator on a Herglotz space $\mathcal{L}(h)$ and $A=A_h$ as the self adjoint extension generating the difference quotient operator. Then  any regular extension $\tilde{A}$ is given by the following formula: 
\begin{align}\label{For:ldjsk}
        (\tilde{A}-w)^{-1}(f) =D_w(f)-\frac{f(w)}{g(w)} \cdot D_w(g), \quad  w \in \varrho(A_h) \vcentcolon g(w)\neq 0.
    \end{align} 
Recall that in this language the function $g$ plays the same role as $q+c$. 
    Now let us consider an isolated eigenvalue $w$ of the relation $A_h$, which has to be of order one \cite{Beherndt}. This means that $h$ has a pole at $w$ of order one, and every function in $\mathcal{M}(h)$ is either analytic at $w$ or has a pole of order one there (see Proposition \ref{Propfunctionen}). 
    This allows us to state the following theorem: 
\begin{theorem}
    Let $w \in \R$ be an isolated eigenvalue of $A_h$. Then the following holds: 
    \begin{itemize}
        \item If  $g(w)= \infty$, then $w$ is an element of $\varrho(\tilde{A})$.
        \item If $0 \neq g(w)\neq \infty$, then  $w \in \R$ is an isolated eigenvalue of $\tilde{A}$ of algebraic multiplicity one.
        \item If $g(w)=0$ and $w$ is a zero of order $n$, then $w$ is an eigenvalue of algebraic multiplicity $n+1$.
    \end{itemize}
\end{theorem}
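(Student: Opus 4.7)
My plan is to analyze the operator-valued resolvent $R(\zeta):=(\tilde{A}-\zeta)^{-1}$ on a punctured neighbourhood of $w$ by means of the closed-form identity
\begin{equation*}
R(\zeta)(f)(\xi)=\frac{g(\zeta)f(\xi)-f(\zeta)g(\xi)}{g(\zeta)(\xi-\zeta)},
\end{equation*}
which follows from \eqref{For:ldjsk} by a short algebraic rearrangement. The assumption that $w$ is an isolated eigenvalue of $A_h$ means $h$ has a simple pole at $w$, so by Propositions \ref{Prop:RealHnfunctions} and \ref{Propfunctionen} every $f\in\mathcal{L}(h)$ and every element of $\mathcal{M}(h)$ is meromorphic at $w$ with at most a simple pole. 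The three bullets of the theorem then exhaust the local behaviour of $g$ at $w$, and the proof reduces to a Laurent analysis of the displayed expression.

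For Case 1, I would write $g(\zeta)=b/(\zeta-w)+g_{0}(\zeta)$ with $b\neq 0$ and multiply numerator and denominator of the closed-form by $(\zeta-w)$. Setting $G(\zeta):=(\zeta-w)g(\zeta)$ produces an analytic factor with $G(w)=b$, and one then checks that any simple pole of $f$ at $w$ is exactly cancelled against the pole of $g$, so that the resulting expression extends continuously in $\zeta$ to a bounded operator $T(w)$ on $\mathcal{L}(h)$. The resolvent identity for $R(\zeta)$ on the punctured neighbourhood passes to the limit, and Proposition \ref{Prop:PseudoRes0} together with uniqueness of the generating relation forces $T(w)=(\tilde{A}-w)^{-1}$, establishing $w\in\varrho(\tilde{A})$.

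For Case 2, I would compute $\lim_{\zeta\to w}(\zeta-w)R(\zeta)$ directly from the closed-form. Since $g$ is analytic at $w$ with $g(w)\neq 0$, only the term $f(\zeta)g(\xi)/(g(\zeta)(\xi-\zeta))$ contributes, and the calculation yields the rank-one operator $f\mapsto-\mathrm{Res}_{w}(f)/g(w)\cdot g(\xi)/(\xi-w)$. Hence $R(\zeta)$ has a simple pole at $w$, so $w$ is isolated in $\sigma(\tilde{A})$, and a repetition of the eigenvector calculation from the proof of Theorem \ref{Theorm:Sepctrum}, namely solving $\tilde{A}f=wf$ through $f=(w-z)R(z)f$, forces every eigenvector to be a scalar multiple of $g(\xi)/(\xi-w)\in\mathcal{L}(h)$; combined with the identification of pole order and ascent, this yields algebraic multiplicity one.

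For Case 3, I would write $g(\zeta)=(\zeta-w)^{n}g_{1}(\zeta)$ with $g_{1}(w)\neq 0$. Picking an $f$ with nonzero residue at $w$ shows that $f(\zeta)/g(\zeta)$ behaves like $\mathrm{Res}_{w}(f)/(g_{1}(w)(\zeta-w)^{n+1})$, so the pole order of $R(\zeta)$ at $w$ is exactly $n+1$; no higher singularity can arise because every other factor in the closed-form is meromorphic of order at most one at $w$. The same eigenvector calculation again produces a one-dimensional eigenspace spanned by $g(\xi)/(\xi-w)$, which belongs to $\mathcal{L}(h)$ precisely because $g$ vanishes at $w$, and geometric simplicity together with pole-order-equals-ascent from \cite{FuncAnaOld} yields algebraic multiplicity $n+1$. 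The main obstacle will be this last case: I need to verify that the leading principal-part coefficient is genuinely nonzero on the operator level (not merely on generic $f$), and to confirm that the ascent-equals-pole-order principle still applies when $w$ lies in $\sigma(A_h)$ rather than $\varrho(A_h)$ as in Theorem \ref{Theorm:Sepctrum}; both points reduce to the explicit Laurent analysis of the closed-form above.
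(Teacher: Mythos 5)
Your route is genuinely different from the paper's. The paper never expands the resolvent at $w$ at all: it switches the reference self-adjoint extension from $A_h$ to $A_{-\frac{1}{h}}$, notes that an isolated eigenvalue $w$ of $A_h$ lies in $\varrho(A_{-\frac{1}{h}})$, that this switch multiplies the defining function by $\frac{1}{h}$, and then reads off all three bullets from Theorem \ref{Theorm:Sepctrum} applied to $\frac{g}{h}$, which has a nonzero value, a simple zero, or a zero of order $n+1$ at $w$ in the three cases respectively. That manoeuvre buys exactly the things you identify as ``the main obstacle'': boundedness of the limiting operator, membership of the eigenvectors in $\mathcal{L}(h)$, and the validity of the pole-order-equals-ascent argument are all inherited from the already proved situation $w\in\varrho(A)$, so nothing has to be re-established at a spectral point of the reference extension.

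As written, your direct argument has a genuine gap at precisely those points. In Case 1 you produce a pointwise limit $T(w)f(\xi)=\lim_{\zeta\to w}R(\zeta)f(\xi)$, but Proposition \ref{Prop:PseudoRes0} requires $T(w)$ to be a \emph{bounded operator on} $\mathcal{L}(h)$; pointwise convergence of the kernel expression gives neither $T(w)f\in\mathcal{L}(h)$ nor a norm bound, and the numerator you obtain involves $g$, which is only known to lie in $\mathcal{M}(h)$ and need not belong to $\mathcal{L}(h)$, so even membership of the limit function in $\mathcal{L}(h)$ is not obvious. Similarly, in Cases 2 and 3 the candidate eigenvector $g(\xi)/(\xi-w)$ is, up to a multiple of the $A_h$-eigenfunction $(\xi-w)^{-1}$, the function $D_w(g)$; the definition of $\mathcal{M}(h)$ only guarantees $D_z(g)\in\mathcal{L}(h)$ for $z\in\mathrm{dom}(h)$, and here $w\notin\mathrm{dom}(h)$, so this needs a separate argument (say via the integral representation of Proposition \ref{Propfunctionen} together with $\mu(\{w\})>0$). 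The same caveat applies to transplanting the eigenvector computation of Theorem \ref{Theorm:Sepctrum}, which uses $\varphi_v(w)$ and hence $(A-w)^{-1}$ at a point $w\in\sigma(A_h)$. All of this can be repaired, but the cleanest repair is essentially the paper's: transport the problem to the reference extension $A_{-\frac{1}{h}}$, for which $w$ is a regular point, instead of carrying out the analysis at a singular point of $A_h$.
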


The following proof also sheds some light on how to switch between different self-adjoint extensions as the chosen reference relation. 
\begin{proof}
   First, let us consider the self-adjoint extension $A_{-\frac{1}{h}}$, which generates the difference quotient operator on  $\mathcal{L}\left( -\frac{1}{h}\right)$. This  self-adjoint extension fits into the following commuting diagram \cite{Beherndt}: 
    \[\begin{tikzcd}
\mathcal{L}(h) \arrow{r}{(A_{-\frac{1}{h}}-w)^{-1}} \arrow[swap]{d}{\frac{1}{h}} & \mathcal{L}(h)  \\
 \mathcal{L}\left( -\frac{1}{h}\right) \arrow{r}{D_w} & \mathcal{L}\left( -\frac{1}{h}\right) \arrow{u}{h}
\end{tikzcd}.
\]
Then it is easy to check that we can represent the regular extension $\tilde{A}$ by the following two commuting diagrams: 
   \[\begin{tikzcd}
\mathcal{L}(h) \arrow{r}{(\tilde{A}-w)^{-1}} \arrow[swap]{d}{\frac{1}{g}} & \mathcal{L}(h)  \\
\frac{1}{g} \mathcal{L}(h) \arrow{r}{D_w} & \frac{1}{g} \mathcal{L}(h) \arrow{u}{g}
\end{tikzcd} \quad \quad 
\begin{tikzcd}
\mathcal{L}\left( -\frac{1}{h}\right) \arrow{r}{(\tilde{A}-w)^{-1}} \arrow[swap]{d}{\frac{h}{g}} & \mathcal{L}\left( -\frac{1}{h}\right)  \\
\frac{1}{g} \mathcal{L}\left( -\frac{1}{h}\right) \arrow{r}{D_w} & \frac{1}{g} \cdot \mathcal{L}\left( -\frac{1}{h}\right) \arrow{u}{\frac{g}{h}}
\end{tikzcd}.
\]
In other words, switching to the self-adjoint extension $\mathcal{A}_{-\frac{1}{h}}$ leads to a multiplication of the defining function $g$ by $\frac{1}{h}$. Finally, it is well known that if  $A_h$ has an isolated eigenvalue at $w$, then $w \in \varrho(A_{-\frac{1}{h}})$ \cite{Beherndt}. Consequently, we can use Theorem \ref{Theorm:Sepctrum} with $A_{-\frac{1}{h}}$ as the base relation and $\frac{g}{h}$ as the defining function. Recall that   $h$ has a pole of order one at $w$. Thus if $g$ has a pole (of order one) there as well, then  $\frac{g}{h}(w)\neq 0$. This then means that $w \in \varrho(\tilde{A})$. The other cases are treated accordingly.  
\end{proof}
Finally, we turn out attention towards the essential part of the spectrum. For an arbitrary operator $B$ it is defined as follows: 
\begin{align*}
    \sigma_{ess}(B)&\vcentcolon 
    =\sigma(B) \setminus   \{x \text{ is an isolated eigenvalue of finite algebraic multiplicity}\}. 
\end{align*}
We point out that there are many different definitions of essential spectra for non-self adjoint operators \cite{EssentialSPectrum}. Moreover, we recall that the  essential spectrum of all self-adjoint extensions of $S$ coincides \cite[Section 3]{EssSpectra}. However, it is not an invariant of arbitrary extensions. Indeed, we just proved that a suitable simple symmetric operator $S$  has an extension $\tilde{A}$ such that  $\C^+\subset \sigma(\tilde{A})$, which means that $\sigma_{ess}(\tilde{A})=\overline{\C^+}$. It turns out, that this edge case is the only obstruction for the essential spectrum to be an invariant:
\begin{theorem}
    Let  $S_h$ be the multiplication operator by the independent variable on some Herglotz space $\mathcal{L}(h)$, $A_h$ the self-adjoint extension giving rise to the difference quotient operator, and $\tilde{A}$  a regular extension defined by some function $g \in \mathcal{M}(h)$ (see formula \eqref{For:ldjsk}).
    Then the following holds:
\color{black}
    \begin{align*}
        \sigma_{ess}(A_h)\cap \R = \sigma_{ess}(\tilde{A}) \cap \R.
    \end{align*}
    Moreover, if the function $g$  is not identically zero on neither $\C^+$ nor $\C^-$, then it even holds that 
    \begin{align*}
        \sigma_{ess}(A_h) = \sigma_{ess}(\tilde{A}).
    \end{align*}
\end{theorem}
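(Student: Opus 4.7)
The plan is to exploit the rank-one resolvent difference
\[
(\tilde{A}-z)^{-1} - (A_h-z)^{-1} = -\frac{[\,\cdot\,,\varphi_\phi(\overline z)]_{\mathcal H}}{q(z)+c}\cdot\varphi_v(z),
\]
valid for every $z \in \varrho(A_h)\cap\varrho(\tilde A)$. Such a common point always exists: $\tilde A$ is a regular extension, and even if $g$ vanishes identically on one half-plane the other half-plane contains cofinitely many such $z$. Being rank-one, the difference is compact, and the classical Weyl-type stability theorem for closed linear relations with a common resolvent point and compact resolvent difference yields that the Fredholm essential spectra of $A_h$ and $\tilde A$ (the points where $T-w$ fails to be Fredholm) coincide.

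For the inclusion $\sigma_{ess}(A_h)\cap\R\subseteq\sigma_{ess}(\tilde A)\cap\R$: since $A_h$ is self-adjoint with simple spectrum (every eigenvalue has equal algebraic and geometric multiplicity), the Browder essential spectrum $\sigma_{ess}(A_h)$ in the sense of this paper coincides with the Fredholm essential spectrum, and hence with the Fredholm essential spectrum of $\tilde A$. At every such $w$ the relation $\tilde A-w$ fails to be Fredholm, so $w$ cannot be an isolated eigenvalue of $\tilde A$ of finite algebraic multiplicity (such points being Fredholm of index zero); thus $w\in\sigma_{ess}(\tilde A)$.

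For the reverse inclusion $\sigma_{ess}(\tilde A)\cap\R\subseteq\sigma_{ess}(A_h)\cap\R$, I would argue by contrapositive using the case analysis already developed. Assume $w\in\R\setminus\sigma_{ess}(A_h)$; then either $w\in\varrho(A_h)$ or $w$ is an isolated eigenvalue of $A_h$. In the first case, Proposition \ref{Propfunctionen} gives $\sigma(|\nu_g|)\subseteq\sigma(|\nu_h|)$, so $g$ is analytic at $w$, and its zero order there is necessarily finite: otherwise $g\equiv 0$ on the component of $\mathrm{dom}(g)$ containing $w$, which by the remark in Section \ref{SectionMh} forces $\sigma(A_h)=\R$ and contradicts $w\in\varrho(A_h)\cap\R$. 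Theorem \ref{Theorm:Sepctrum} then places $w$ either in $\varrho(\tilde A)$ or among the isolated eigenvalues of $\tilde A$ of finite algebraic multiplicity. In the second case, the immediately preceding theorem on isolated eigenvalues of $A_h$ supplies the same dichotomy directly. Either way $w\notin\sigma_{ess}(\tilde A)$.

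For the stronger equality under the additional hypothesis that $g$ is identically zero on neither half-plane, the corollary on Blaschke distributions shows that $\sigma(\tilde A)\cap(\C\setminus\R)$ consists of isolated eigenvalues of finite algebraic multiplicity, so $\sigma_{ess}(\tilde A)\subseteq\R$; combined with $\sigma_{ess}(A_h)\subseteq\R$ and the equality on $\R$, this yields $\sigma_{ess}(A_h)=\sigma_{ess}(\tilde A)$. I expect the main hurdle to be the Weyl invariance step for possibly unbounded, non-self-adjoint linear relations: standard references state it for operators only, so one may need to verify the passage to relations or, more concretely, construct singular (Weyl) sequences for $\tilde A$ from those for $A_h$ using the explicit rank-one formula above.
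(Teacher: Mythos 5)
Your proposal is correct in outline, but it takes a genuinely different route for the harder inclusion. For $\sigma_{ess}(\tilde{A})\cap\R\subseteq\sigma_{ess}(A_h)\cap\R$ you and the paper argue essentially identically: if $w\notin\sigma_{ess}(A_h)$ then $g$ is meromorphic with a finite-order zero or pole near $w$, so locally $\sigma(\tilde{A})$ consists of isolated eigenvalues of finite algebraic multiplicity. For the opposite inclusion the paper stays inside the reproducing-kernel model: it takes the two test functions $f_1,f_2$ obtained by cutting the representing measure off and on a small interval around $w$, notes that $x\mapsto[(\tilde{A}-x)^{-1}f,N_h(\cdot,z)]$ must continue meromorphically when $w\notin\sigma_{ess}(\tilde{A})$, deduces from $f_1$ that $g$ is meromorphic there, and then gets a contradiction from $f_2$, which has no meromorphic continuation when $w\in\sigma_{ess}(A_h)$. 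You instead invoke Weyl-type invariance of the Fredholm essential spectrum under the rank-one (hence compact) resolvent difference, and you deploy it only in the direction where the identification of Browder with Fredholm essential spectrum is available (on the self-adjoint side), which correctly sidesteps the failure of Browder-spectrum stability under compact perturbations. Your route is shorter and does not use the $\mathcal{L}(h)$ model for that inclusion; its cost is precisely the external input you flag, namely the compact-perturbation stability of the Fredholm essential spectrum for closed linear relations rather than operators, together with the facts that an isolated eigenvalue of finite algebraic multiplicity of a closed relation is a Fredholm point of index zero and that a common resolvent point of $A_h$ and $\tilde{A}$ exists. All of these reduce to the bounded-operator case via the resolvents, so I see no genuine gap, only details to supply (including the finiteness of the order of the zero of $g$ and the isolatedness of the resulting eigenvalues in your case analysis, both of which follow, as you indicate, from $g$ not vanishing identically on the relevant component of $\mathrm{dom}(h)$).
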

\begin{proof}
If the function  $g$ is identically zero on $\C^+$ (or $\C^-$), then $\sigma_{ess}(\tilde{A}) = \overline{\C^+}$.  In this case,  $\sigma_{ess}(A_h)$ has to coincide with the real line (see Section \ref{SectionMh}), which means that we established our claim.
\par\smallskip
Therefore, we can assume without loss of generality that 
$g$ is not identically zero on neither $\C^+$ nor $\C^-$. Then it holds that $\sigma_{ess}(\tilde{A}) \subset \R$, because  $g$ can only have zeros of finite order on $\C\setminus \R$.  Now, we show that $\sigma_{ess}(A_h) = \sigma_{ess}(\tilde{A})$. 
\par\smallskip
First, let us assume that $w \notin \sigma_{ess}(A_h)$. This means that $w$ is at worst an isolated eigenvalue, and we can find an $\epsilon>0$ such that $(w-\epsilon,w+\epsilon)\cap \sigma(A_h)$ consists of at most $w$. Then the function $g \vcentcolon \varrho(A) \rightarrow \C$ is meromorphic on the  set $(w-\epsilon,w+\epsilon)$. This means that $(w-\epsilon,w+\epsilon) \cap \sigma(\tilde{A}_h)$ consists only of the zeros 
 $g$  and possibly $w$. All these points are eigenvalues of finite multiplicity and they do not accumulate towards $w$ (because $g$ is not identically zero).  Therefore, $w$ is not an element of $ \sigma_{ess}(\tilde{A})$.
\par\smallskip
Conversely, let $w\notin \sigma_{ess}(\tilde{A})$, which means that  $w$ is at worst an eigenvalue of finite order. Moreover, let us assume, for sake of contradiction, that $w\in \sigma_{ess}(A_h)$. We define the following two functions in $\mathcal{L}(h)$ (see Proposition \ref{Prop:RealHnfunctions}): 
\begin{align*}
     f_1(\zeta) \vcentcolon = \int_{\R \setminus (w-\alpha,w+\alpha)} \frac{1}{t-\zeta} \cdot d\mu(t) \quad \quad   
      f_2(\zeta) \vcentcolon = \int_{ (w-\alpha,w+\alpha)} \frac{1}{t-\zeta} \cdot d\mu(t) 
\end{align*}
Here $\alpha>0$ is chosen small enough such that both $f_1$ and $f_2$ are non-trivial and that the set 
\begin{align*}
    \sigma(\tilde{A})\cap (w-\alpha,w+\alpha)
\end{align*}
consists only (possibly) of $w$. We note that $f_1$ is analytic on $(w-\alpha,w+\alpha)$ and that $f_2$ has no meromorphic continuation on $(w-\alpha,w+\alpha)$. The latter is true because  $w \in \sigma_{ess}(A_h)$ \cite{Beherndt}. 
\par\smallskip
Now let $z \in \C^+$ be fixed. Then  the function
\begin{align}\label{for:hdsasa}
    x \mapsto [(\tilde{A}-x)^{-1}(f),N_h(\cdot,z)]= D_x(f)(z)-\frac{f(x)}{g(x)} \cdot D_x(g)(z)
\end{align}
has to have a meromorphic continuation through $(w-\alpha,w+\alpha)$ for any $f \in \mathcal{L}(h)$, because $\tilde{A}$ has at most one  isolated eigenvalue of finite multiplicity in this interval.
Let us first plug  $f_1$ into formula \eqref{for:hdsasa}. Then we infer that $g$ has to be meromorphic. In the second step, let us plug  $f_2$ into \eqref{for:hdsasa}. Then, using the fact that $g$ is meromorphic there, it is readily verified that the resulting function cannot be meromorphic on $(w-\alpha,w+\alpha)$. Therefore, we end up with a contradiction, which concludes our proof. 
\color{black}
\end{proof}
\section{Examples}
We decided to postpone an in-depth discussion of various examples to the follow-up paper, in which we will interpret regular extensions as asymmetric singular perturbations. For now, we only briefly discuss three  common sources of asymmetry. 
\par\smallskip
First, let $S_1$ and $S_2$ be two (densely defined) symmetric operators  with deficiency index $(1,1)$ on Hilbert spaces $\mathcal{H}_1$ and $\mathcal{H}_2$. Then  we can consider the self-adjoint extensions of $S_1 \oplus S_2$ in the Krein space $\mathcal{K} = \mathcal{H}_1 \oplus (-\mathcal{H}_2)$. Such an  extension $\tilde{A}$ is called  partially fundamentally reducible  \cite{PartFund}, and it can be described by a Krein type-resolvent formula  of the following form \cite[Theorem 4.7]{PartFund}: 
\begin{align*}
    (\tilde{A}-w)^{-1} = 
 (A-w)^{-1}-\frac{[\ \cdot \ ,  \gamma(\overline{w}) ]_{\mathcal{K}}}{\tilde{g}(w)} \cdot  \gamma(w).
\end{align*}
Here,  $A$ is an operator which decomposes into an orthogonal sum $A=A_1\oplus A_2$ of self-adjoint operators on $\mathcal{H}_1$ and  $\mathcal{H}_2$, and is therefore self-adjoint on the Hilbert space  $\mathcal{H}_1 \oplus \mathcal{H}_2$. Finally, $\gamma \vcentcolon  \varrho(A) \rightarrow \mathcal \mathcal{K}$ and $\tilde{g} \vcentcolon \varrho(A) \rightarrow \mathcal \C$ are suitable generalizations of the Weyl-solution and Weyl-function. Either way, this construction is  a special case of our extension theory. Indeed,  let us consider the following bounded functional on $\mathrm{dom}(A)$:
\begin{align*}
    E \vcentcolon \mathrm{dom}(A) \rightarrow \C, \quad 
    E(y) \vcentcolon =  [(A-w) y ,  \gamma(\overline{w}) ]_{\mathcal{K}}
\end{align*}
In this case, it is well-known  that the operator 
\begin{align*}
    S \vcentcolon = A_{|\{u \in \mathrm{dom}(A) \vcentcolon E(u)=0\}} 
\end{align*}
 is a symmetric operator with deficiency index $(1,1)$ on the Hilbert space $\mathcal{H}_1 \oplus \mathcal{H}_2$ \cite{KurasovPert}. Furthermore, it is straightforward to check that $A$ and $\tilde{A}$ are regular extensions of $S$.  This means that the Krein-type resolvent formula above is a special case of the theory  developed in this article.
 Finally, we point out that this class includes many interesting examples including certain indefinite Sturm-Lioville operators \cite{OpenProblem} and operator models for meromorphic functions of bounded type \cite{Me3}. 
 \color{black}
 \par\medskip
 Next, we discuss certain differential operators with asymmetric boundary conditions. For example, let us consider the space $\mathcal{H}=L^2([0,2])$ and the operator $D(f)=-i \cdot f^\prime$. Then it is well-known that a symmetric operator with deficiency index $(1,1)$ is given by the  boundary condition
\begin{align*}
    f(0)=f(2 \pi )=0.
\end{align*}
Moreover, let us consider the following two boundary conditions: 
\begin{align*}
    f(0)=f(2 \pi ), \quad \text{ and } \quad f(0)=0.
\end{align*}
The first boundary condition leads  to a self-adjoint extension and the second one to a regular extension. The latter  extension is of particular interest, because it has empty spectrum \cite{SingPertubation2}. 
\par\medskip
Finally, we have discussed regular extensions of simple symmetric operators in depth in this article. In doing so, we  relied on the language of reproducing kernels, which turns out to be the most convenient setting for this type of analysis. 
We conclude this article with a complementary  discussion using the Sturm-Liouville model. 
\par\smallskip
To this end, we consider the minimal operator associated to the Sturm-Liouville differential expression $\tau = \frac{\mathrm{d}^2}{\mathrm{d}^2 x }$ on the space $L^2((0,\infty))$.  The maximal and minimal operators are given by the following respective domains: 
\begin{align*}
    \mathcal{D}_{max} &= \{ f \in  L^2((0,\infty)) \vcentcolon f \text{ and } f^\prime \text{ are absolutely continuous on } [0,\infty) \text{ and } f^{\prime\prime} \in L^2((0,\infty))\} \\
    \mathcal{D}_{min} &= \{ f \in  \mathcal{D}_{max} \vcentcolon f(0)=0, f^\prime(0)=0\} 
\end{align*}
The minimal operator is a simple symmetric operator with deficiency index $(1,1)$. A self-adjoint extension $A_0$ of the minimal operator is given by the restriction of the maximal operator to the following domain: 
\begin{align*}
    \mathcal{D}_{A_0}=  \{ f \in  \mathcal{D}_{max} \vcentcolon f(0)=0\} 
\end{align*}
We can define the following bounded functional on $\mathrm{dom}(A)$:
\begin{align*}
 E \vcentcolon \mathrm{dom}(A_0) \rightarrow \C, \quad  E(f) \vcentcolon =   f^\prime(0).
\end{align*}
Now let $g \in L^2((0,\infty))$ and consider the following operator: 
\begin{align*}
    \tilde{A} \vcentcolon \mathcal{D}_{A_0} \rightarrow L^2((0,\infty)), \quad \tilde{A}(f)\vcentcolon=A_0(f) + E(f) \cdot g =f^{\prime\prime}+f^\prime(0) \cdot g
\end{align*}
It can be shown that $\tilde{A}$ is a regular extension. Thus we end up with an extra term with a frozen argument, which were discussed on multiple occasions in the past \cite{FrozenArgument}. We note that not all  regular extensions of the minimal operator are of this form. A complete description of them will be given in the subsequent article dealing with  asymmetric singular perturbations.

\printbibliography

\end{document}